\renewcommand{\theta}{\vartheta}
\newtheorem{theorem}{Theorem}[section]
\newtheorem{lemma}[theorem]{Lemma}
\newtheorem{proposition}[theorem]{Proposition}
\newtheorem{definition}[theorem]{Definition}
\newcommand{\F}{\mathbf{F}}
\newcommand{\N}{\mathbf{N}}
\newcommand{\Q}{\mathbf{Q}}
\DeclareMathOperator{\Sym}{Sym}
\DeclareMathOperator{\Alt}{Alt}
\DeclareMathOperator{\Mat}{Mat}
\DeclareMathOperator{\GL}{GL}
\DeclareMathOperator{\Cent}{Cent}
\DeclareMathOperator{\im}{im}
\title[On types of matrices]{On types of matrices and centralizers of matrices and permutations}
\author{John R. Britnell}
\address{Department of Mathematics, Imperial College London, London, SW7 2AZ}
\email{j.britnell@imperial.ac.uk}
\author{Mark Wildon}
\address{Department of Mathematics, Royal Holloway, University of London, Egham, Surrey TW20 0EX, United Kingdom}
\email{mark.wildon@rhul.ac.uk}
\subjclass[2010]{Primary 15A27; Secondary 15A21, 12F15, 20B35}
\begin{document}
\maketitle
\thispagestyle{empty}

\begin{abstract}
It is known that that the centralizer of a matrix
over a finite field depends, up to conjugacy, only on the type of the
matrix, in the sense defined by J.~A.~Green.
In this paper an analogue of the type invariant is defined that
in general captures more information; using this invariant the
result on centralizers is extended to arbitrary fields.
The converse is also proved: thus two matrices have conjugate centralizers
if and only if they have the same generalized type.
The paper ends with the analogous results for symmetric and alternating groups.
\end{abstract}

\section{Introduction}
The notion of the type of a matrix over a finite field
was defined by Green in his influential paper \cite{Green} on characters
of finite general linear group, generalizing early work of Steinberg \cite{Steinberg}.
In Green's definition, the type
of a matrix is obtained from its cycle type by formally replacing
each irreducible polynomial with its degree.
In \cite[Lemma 2.1]{Green} Green showed that two matrices
with the same type have isomorphic centralizer algebras. In \cite[Theorem 2.7]{BW_GLClasses}
the authors strengthened this result by proving that the centralizers are in fact
conjugate. In this paper we generalize Green's definition of type to matrices
over an arbitrary field, and prove the following theorem characterizing
all matrices with conjugate centralizers.

\begin{theorem}\label{thm:iff}
Let $K$ be a field and let $X$, $Y \in \Mat_n(K)$. The
centralizers of~$X$ and $Y$ in $\Mat_n(K)$ are conjugate 
by an element of
$\GL_n(K)$ if and only
if $X$ and~$Y$ have the same generalized type.
\end{theorem}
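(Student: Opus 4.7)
The plan is to prove the two directions separately, with both relying on the standard identity $Z(C(X)) = K[X]$ (a consequence of the double-centralizer relation $C(C(X)) = K[X]$), together with the primary decomposition $V = \bigoplus_f V_f$ of $K^n$ into the kernels of the primary factors $f(X)^{m_f}$ of the minimal polynomial. Since this decomposition is preserved by every element of $C(X)$, one has $C(X) = \bigoplus_f C(X_f)$ where $X_f := X|_{V_f}$, so both directions reduce to analysing a single primary component.

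For the forward implication (same generalized type $\Rightarrow$ conjugate centralizers), I argue component by component. The generalized type ought to record, for each irreducible factor $f$ of the minimal polynomial of $X$, the isomorphism class of $L_f := K[t]/(f)$ together with the partition $\lambda_f$ whose parts are the exponents $a_i$ in the decomposition $V_f \cong \bigoplus_i K[t]/(f^{a_i})$. Given matching generalized types, pair up the primary components of $X$ and $Y$ so that each pair $(X_f, Y_g)$ satisfies $L_f \cong L_g$ and $\lambda_f = \lambda_g$. A $K$-algebra isomorphism $K[X_f] \to K[Y_g]$ exists because both rings are isomorphic to $L_f[t]/(t^{m_f})$; using it to transport the $K[X_f]$-module structure on $V_f$ to a $K[Y_g]$-module structure, the equality of partitions produces an intertwining $K$-linear bijection $V_f \to W_g$. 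The direct sum of these bijections is a $P \in \GL_n(K)$ with $P C(X) P^{-1} = C(Y)$.

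For the converse, let $P \in \GL_n(K)$ satisfy $P C(X) P^{-1} = C(Y)$. Taking centres yields a $K$-algebra isomorphism $K[X] \to K[Y]$. Since $K[X] \cong \prod_f K[t]/(f^{m_f})$, this isomorphism matches local direct factors and induces a bijection between the irreducible factors of the two minimal polynomials under which paired factors satisfy $L_f \cong L_g$ and $m_f = m_g$. The primary idempotents are carried across, so $P$ restricts to isomorphisms $C(X_f) \to C(Y_g)$ compatible with the identifications of the centres. The partition $\lambda_f$ can then be recovered intrinsically from $C(X_f)$, viewed as an algebra over $K[X_f] \cong L_f[t]/(t^{m_f})$, for instance through the sequence of $L_f$-dimensions of the successive quotients in its radical filtration; matching these sequences forces $\lambda_f = \lambda_g$.

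The main obstacle will be the forward direction in the case $f \ne g$ but $L_f \cong L_g$. Here $X_f$ and $Y_g$ need not be $\GL$-conjugate at all (their eigenvalues differ in every splitting field), yet the bijection $V_f \to W_g$ constructed above must still conjugate their centralizers. It will send $X_f$ not to $Y_g$ but only to some other generator $Y' \in K[Y_g]$; the argument relies on the observation that any generator of $K[Y_g]$ has centralizer equal to $C(Y_g)$. Making this transport fully explicit, and checking that the component-wise choices assemble into a single element of $\GL_n(K)$, is the technical heart of the proof.
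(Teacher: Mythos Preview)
Your overall architecture is sound, but the forward direction contains a genuine error: the assertion that $K[X_f] \cong L_f[t]/(t^{m_f})$ fails when $f$ is inseparable. For instance, with $K = \F_p(u)$ and $f(x) = x^p - u$, the ring $K[x]/(f^2)$ contains no $p$-th root of $u$ (since $(a+m)^p = a^p$ for any $m$ in the maximal ideal, and $x^p = u + f \ne u$), hence no subfield isomorphic to $L_f$. What you actually need is the weaker fact that $K[t]/(f^m) \cong K[t]/(g^m)$ whenever $f \sim g$. This is true, but proving it over an imperfect field is precisely the difficulty the paper isolates in Lemma~\ref{lemma:JC}: either the primary matrix admits a Jordan--Chevalley decomposition (and one argues as over a perfect field), or it enjoys a strong stability property under polynomial maps that forces the naive polynomial to work. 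Once that isomorphism is available, the rest of your transport-of-structure argument is correct and is in fact a clean alternative to the paper's route via Theorem~\ref{thm:polytypes} and a dimension comparison.

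For the reverse direction, extracting $f \sim g$ and $m_f = m_g$ from the isomorphism of centres is fine, but the claim that the radical filtration of $C(X_f)$ determines $\lambda_f$ is unjustified and not obvious; for example the semisimple quotient $C(X_f)/\rad C(X_f) \cong \prod_h \Mat_{m_h}(L_f)$ records only the multiset of multiplicities $m_h$, not the part-sizes $h$ they belong to, and the higher layers would need a careful analysis. The paper instead exploits the natural module $V_f$, which you also have available since $P$ carries $V_f$ to $W_g$: after base-changing to a splitting field and subtracting the relevant eigenvalue one is reduced to two nilpotent matrices with equal centralizers, and then $\lambda$ is read off from the composition factors of $V$ as a $C(X)$-module (Lemma~\ref{nilpotent} and Proposition~\ref{prop:primary}).
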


The definition of generalized type given in Section~\ref{s:def} below agrees with Green's for
fields with the unique extension property; these include finite fields, and also
algebraically closed fields. Thus an immediate corollary of Theorem~\ref{thm:iff}
is that two matrices over a finite field have the same type if and only if
their centralizers are conjugate. This gives the
converse of Theorem~2.7 of \cite{BW_GLClasses}. 

The proof of Theorem~\ref{thm:iff} is given in Sections~\ref{s:if} and~\ref{s:onlyif} below.
In Section~\ref{s:if}
we prove that two matrices with the same generalized type have conjugate centralizers.
We obtain this result as a corollary of Theorem~\ref{thm:polytypes},
which states that two matrices have the same generalized type if and only if their
similarity classes contain representatives that are polynomial in one another.

In Section~\ref{s:onlyif} we prove the converse implication of Theorem~\ref{thm:iff},
that if two matrices have conjugate centralizers
then their generalized types agree.  This requires a number of `recognition' results
on centralizers that build on the work in~\cite{BW_GLClasses}. Some preliminary
results needed in both parts of the proof are collected in Section~\ref{s:prelim}.

An aspect of our work to which we would like to direct attention is our method,
in the proof of Theorem \ref{thm:polytypes}, for dealing with a possibly
inseparable field extension. This result is a generalization of
\cite[Theorem 2.6]{BW_GLClasses}, but the proof of the earlier result depends on
the existence of a Jordan--Chevalley decomposition, which can fail when the field is arbitrary.
We avoid this problem by means of Lemma~\ref{lemma:JC}, which offers a dichotomy:
if the  minimal polynomial of a matrix $X$ is a power of irreducible polynomial,
then  either~$X$ has a Jordan--Chevalley decomposition, or else~$X$ possesses a very
strong stability property under polynomial functions.

It is possible to make a similar statement about centralizers in symmetric groups, to the
effect that permutations with conjugate
centralizers have the same cycle type, except for certain `edge cases'. It is clear
that this result is directly
analogous to Theorem \ref{thm:iff}, and since we have not found it in the literature,
we have included it here. Section
\ref{s:Sn} contains this result (Theorem~\ref{thm:Sn}), and also the corresponding
result for centralizers
in alternating groups.

It is natural to ask whether the generalized type of a matrix is determined by the unit group of its centralizer.
In the case of a matrix $X$ over any field other than $\F_2$, the answer is that its type is indeed so determined; this follows
from Theorem~\ref{thm:iff} via the observation that any element of the centralizer algebra of $X$ is a sum of two units. For let
$Y\in \Cent(X)$, and consider the primary decomposition of $Y$; define $T$ to act as the identity on all but the unipotent summand of $Y$, and as
any non-identity, non-zero scalar on that summand; then $T$ and $Y-T$ are both units. Centralizers over the field~$\F_2$ are not always generated linearly
by their unit groups however, and for instance the centralizers of the two matrices
\[ 
\left( \begin{matrix} 1 & 0 \\ 0 & 0 \end{matrix} \right), \quad 
\left( \begin{matrix} 1 & 1 \\ 0 & 0
\end{matrix} \right)
\]
are distinct, although each has a trivial unit group.

\section{Types and generalized types}\label{s:def}

Let $K$ be a field, let $n \in \N$, let $V$ be the $K$-vector space $K^n$,
and let $X\in\Mat_n(K)$;
we suppose throughout that matrices act on the right.
Let $V=\bigoplus U_i$ be a decomposition
of $V$ as a sum of $X$-invariant subspaces, on each of which the action of $X$ is indecomposable. Let $X_i$ be $X$
restricted to $U_i$. Then each $X_i$ is a cyclic matrix and
the minimum polynomial of $X_i$ is $f^t$, for some polynomial $f$ irreducible over $K$, and
some positive integer $t$. For each such
irreducible $f$, let $\lambda_f$ be the partition obtained by collecting together
the values of $t$ arising in this way (counted with multiplicity). Although the decomposition of $V$ is not in general
unique, the partitions $\lambda_f$ are invariants of $X$ and
collectively they determine $X$ up to similarity of matrices.

Suppose that $X$ is a matrix whose characteristic polynomial has the
irreducible factors $f_1,\dots, f_t$,
with respective degrees $d_1,\dots,d_t$,
and that the partition invariants corresponding to these polynomials are
$\lambda_1,\dots \lambda_t$ respectively.
The \emph{cycle type} of $X$ is
the formal product
$f_1^{\lambda_1}\cdots f_t^{\lambda_t}$.
We say that a matrix over a field $K$ is \emph{primary} if it has cycle
type~$f^\lambda$ for some
irreducible polynomial $f$ and partition~$\lambda$.
The \emph{type} of $X$,
as defined by Green in \cite[page~407]{Green}
is the formal product $d_1^{\lambda_1}\cdots d_t^{\lambda_t}$.

Green's definition of type makes sense when $K$ is an arbitrary field.
However Theorem 2.8 of \cite{BW_GLClasses}, which states that matrices over a finite
field with the same type
have conjugate centralizers, would not extend to matrices over arbitrary fields if this definition were in force.
To give an instance, let $X$ and $Y$ be the rational companion matrices of
the irreducible polynomials $f(x)=x^2-2$ and $g(x)=x^2-3$. 
These matrices both have type $2^{(1)}$. Since
$X$ and $Y$ are cyclic we have that $\Cent X = \Q\langle X\rangle$ and
$\Cent Y = \Q\langle Y \rangle$. But~$X$ is not conjugate
to a polynomial in~$Y$, since the eigenvalues of $X$ and~$Y$
lie in distinct quadratic extensions of $\Q$.

This example, however,
suggests a very natural way of extending Green's definition which, as we shall
show, allows the
theorem we have mentioned to be generalized to infinite fields.

\begin{definition}\label{defn:eqv}
Let $K$ be a field, and let $\Phi$ be the set of irreducible polynomials over $K$.
Let $f$, $g \in \Phi$ and let $L$ be a splitting field for $fg$. We say
that $f$ is \emph{equivalent} to $g$ if whenever $\alpha \in L$ is a root
of $f$ there exists a root $\beta \in L$ of $g$ such that $K(\alpha) = K(\beta)$,
and vice versa. We denote equivalence by $f \sim g$, and
denote the equivalence class of $f$ by $[f]$.
\end{definition}

Since all splitting fields for $fg$ are isomorphic as extensions
of~$K$, this definition does not depend on the choice of $L$.

\begin{definition}\label{defn:gentype}
Let $X \in \Mat_d(K)$ 
and let $\Phi_X$ be the set of irreducible polynomials
for which the partition invariant $\lambda_f$ of $X$ is non-empty.
We define the \emph{generalized type} of $X$ to be the formal product
\[\prod_{f \in \Phi_X} 
 [f]^{\lambda_f} \]
in which the order of terms is unimportant.
\end{definition}

We note that if
$K$ has the unique extension property (and in particular, if $K$ is finite),
then two polynomials are
equivalent under $\sim$ if and only if they have the same degree.
Our definition of generalized type therefore agrees with Green's in this case.

\section{Preliminary results}\label{s:prelim}

We require two general results from \cite{BW_GLClasses}.
For $d\in \N$, and for a partition~$\lambda$, we write
$d\lambda$ for the partition with $d$ parts of size $i$ for every
part of size~$i$ in~$\lambda$.
For a partition $\lambda$ we write $N(\lambda)$ for the
similarity class of nilpotent matrices of type $1^\lambda$.
The dominance order on partitions will be denoted by~$\unlhd$.


\begin{proposition}[{\cite[Proposition 2.2]{BW_GLClasses}}]\label{prop:nilpclass}
Let $M$ be a matrix of primary type~$d^\lambda$. If the cycle
type of $M$ is $f^\lambda$ then $f(M)$ is nilpotent and $f(M) \in N(d\lambda)$.
\end{proposition}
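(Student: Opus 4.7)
The plan is to reduce to a single cyclic summand and then to recover a nilpotent Jordan type from kernel dimensions. First, decompose $V = \bigoplus_i U_i$ into $M$-invariant indecomposable subspaces as in the definition of cycle type, so each $M_i := M|_{U_i}$ is cyclic with minimal polynomial $f^{t_i}$, and the multiset of the exponents $t_i$ is the partition $\lambda$. Since $f(M)$ preserves each $U_i$, the Jordan type of $f(M)$ is the multiset-union of the Jordan types of the restrictions $f(M_i)$, and nilpotence can also be verified summand by summand.

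Next I would identify $U_i$ with $K[x]/(f^{t_i})$ as a cyclic $K[x]$-module on which $x$ acts as $M_i$; this is legitimate because $M_i$ is cyclic with minimal polynomial $f^{t_i}$, matching the $K$-dimension $dt_i$ of $U_i$. Under this identification the operator $f(M_i)$ becomes multiplication by $f$, which is zero on the $t_i$th iteration, giving nilpotence of $f(M)$ at once.

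It remains to compute the Jordan type of multiplication by $f$ on $K[x]/(f^{t_i})$. Because $K[x]$ is a PID and $f$ is irreducible, the kernel of multiplication by $f^j$ is the submodule $(f^{t_i-j})/(f^{t_i})$, which has $K$-dimension $dj$ for $0 \le j \le t_i$. Hence the rank-drop sequence $\dim\ker(f^j) - \dim\ker(f^{j-1})$ equals $d$ for $1 \le j \le t_i$ and vanishes thereafter; since this sequence is precisely the conjugate of the Jordan type, the Jordan type of $f(M_i)$ is conjugate to $(d^{t_i})$, namely $d$ blocks each of size $t_i$. Taking the multiset-union over all $i$ yields a partition with $d$ parts of size $s$ for every part of size $s$ in $\lambda$, which is $d\lambda$ by definition, so $f(M) \in N(d\lambda)$. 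The only step where a little care is required is the identification of $\ker(f^j)$ with $(f^{t_i-j})/(f^{t_i})$ inside $K[x]/(f^{t_i})$, but this is immediate from the PID structure of $K[x]$ and poses no real obstacle.
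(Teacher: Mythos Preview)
Your argument is correct. The decomposition into cyclic summands, the identification of each summand with $K[x]/(f^{t_i})$, and the kernel-dimension computation for multiplication by $f$ all go through as you describe, and the conclusion that $f(M_i)$ has Jordan type $(t_i^d)$ (hence $f(M)\in N(d\lambda)$) is right.

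Note, however, that the present paper does not actually give a proof of this proposition: it is stated with a citation to \cite[Proposition~2.2]{BW_GLClasses} and used as a black box. So there is no in-paper proof to compare your approach against. What you have written is a clean, self-contained proof that would serve perfectly well in place of the citation; the argument via $\dim\ker f(M)^j = dj$ on each cyclic summand is the natural one, and is essentially how the result is established in the cited reference as well.
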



\begin{proposition}[{\cite[Proposition 2.4]{BW_GLClasses}}]\label{prop:dominance}
Let $X$ be a primary matrix of type~$d^{\lambda}$ with entries from a
field~$K$,
and let $h \in K[x]$ be a polynomial.
The type of $h(X)$ is~$e^\mu$ for some $e$ dividing $d$,
and some partition $\mu$ such that $e|\mu|=d|\lambda|$
and~\hbox{$e \mu \unlhd d \lambda$}.
\end{proposition}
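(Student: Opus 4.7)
The plan is to work inside the local $K$-algebra $K[X]$. Let $f \in K[x]$ be the irreducible polynomial appearing in the cycle type $f^\lambda$ of $X$; then $\deg f = d$ and $X$ has minimal polynomial $f^t$ with $t = \lambda_1$, so $K[X] \cong K[x]/(f^t)$ is a local ring with maximal ideal $(f)$ and residue field $L := K[x]/(f) \cong K(\alpha)$ for a root $\alpha$ of $f$. Set $Y := h(X)$ and $\beta := h(\alpha) \in L$, and let $g \in K[x]$ be the minimal polynomial of $\beta$ over $K$, with $e := \deg g$. The tower $K \subseteq K(\beta) \subseteq K(\alpha)$ gives $e \mid d$. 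Since $(g \circ h)(\alpha) = g(\beta) = 0$, $f$ divides $g \circ h$, and I would factor
\[ g \circ h \;=\; f^m r \qquad \text{with } m \geq 1 \text{ and } \gcd(r, f) = 1. \]
Then $r(X)$ is a unit of $K[X]$, and $g(Y) = f(X)^m r(X)$.

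Next I would show that $Y$ is primary and identify its type. For each $k \geq 1$ we have $g(Y)^k = f(X)^{mk} r(X)^k$; since $r(X)^k$ is a unit, this vanishes if and only if $f^t \mid f^{mk}$, i.e.\ if and only if $mk \geq t$. Hence the minimal polynomial of $Y$ is the pure power $g^{\lceil t/m \rceil}$, which shows $Y$ is primary of cycle type $g^\mu$ for some partition $\mu$, and thus of type $e^\mu$. Matching dimensions of the underlying vector space then gives $e|\mu| = n = d|\lambda|$.

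Finally, I would derive the dominance inequality $e\mu \unlhd d\lambda$ by combining Proposition~\ref{prop:nilpclass} with a routine partition comparison. That proposition identifies $f(X)$ with a nilpotent matrix of partition $d\lambda$, and $g(Y)$ with one of partition $e\mu$. Because the unit $r(X)$ commutes with $f(X)^m$, we have $\rank(g(Y)^k) = \rank(f(X)^{mk})$ for all $k$, so the nilpotent $f(X)^m$ also has partition $e\mu$. It thus suffices to show that for any nilpotent $N$ of partition $\nu$, the nilpotent $N^m$ has partition $\unlhd \nu$. Writing $\mu'$ and $\nu'$ for the conjugate partitions and using the rank-sequence formula $\rank(N^j) = \sum_i \max(\nu_i - j, 0)$, one computes
\[ \sum_{k=1}^\ell \mu_k' \;=\; n - \rank(N^{m\ell}) \;=\; \sum_{k=1}^{m\ell} \nu_k' \;\geq\; \sum_{k=1}^\ell \nu_k', \]
so $\mu' \unrhd \nu'$, whence $\mu \unlhd \nu$. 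Applied with $N = f(X)$, this yields $e\mu \unlhd d\lambda$. The main obstacle lies in the middle step — rigorously confirming that $Y$ is genuinely primary with irreducible $g$ (rather than some smaller one), and that no inseparability subtleties disrupt the identity $e = [K(\beta):K] \mid [K(\alpha):K] = d$. Once $g(Y) = f(X)^m \cdot (\text{unit})$ is in hand, everything downstream is formal.
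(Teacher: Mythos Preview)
The paper does not prove this proposition; it simply cites it as \cite[Proposition~2.4]{BW_GLClasses}. There is therefore no argument in the present paper to compare yours against.

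Your argument is correct, and the ``obstacle'' you flag at the end is not a real one. To see that $Y$ is primary with irreducible factor $g$: if $p(Y)=0$ then $f^t\mid p\circ h$, so in particular $f\mid p\circ h$, hence $p(\beta)=p(h(\alpha))=0$ and $g\mid p$. Thus every polynomial annihilating $Y$ is divisible by $g$, so the minimal polynomial of $Y$ is a pure power of $g$; since the minimal and characteristic polynomials share irreducible factors, $Y$ is primary of cycle type $g^\mu$. The divisibility $e\mid d$ is just the tower law applied to $K\subseteq K(\beta)\subseteq K(\alpha)$ and is insensitive to separability. Your dominance step via ranks and conjugate partitions is clean; the only cosmetic issue is that you reuse the letter $\mu$ both for the partition in the proposition and for the partition of $N^m$ in your auxiliary lemma, but the intent is clear.
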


We also need the following result giving the dimension of the centralizer of a matrix.
If $\lambda$ is a partition with exactly $m_i$ parts
of size $i$, we define
\[ F(\lambda) = \sum_{j} \sum_k \min(j,k)m_jm_k. \]

\begin{proposition}\label{prop:centdim}
Let $K$ be a field and let $X \in \Mat_n(K)$ have type $d_1^{\lambda_1} \ldots d_t^{\lambda_t}$.
Then $\dim_K \Cent X = \sum_{i=1}^t d_i F(\lambda_i)$.
\end{proposition}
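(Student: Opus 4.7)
The plan is to reduce to the primary case via a standard decomposition, then compute the centralizer as an endomorphism ring of a $K[x]$-module.

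First I would observe that the centralizer is compatible with primary decomposition. Writing the characteristic polynomial as $\prod_i f_i^{a_i}$ with the $f_i$ distinct and irreducible, the space $V = K^n$ decomposes as $V = \bigoplus_{i=1}^{t} V_i$ where $V_i = \ker f_i(X)^{a_i}$. Each $V_i$ is $X$-invariant, and any matrix commuting with $X$ preserves each $V_i$ (since the $V_i$ are the kernels of coprime polynomials in $X$). Hence $\Cent X = \bigoplus_{i=1}^t \Cent(X\!\restriction_{V_i})$, and it suffices to prove the formula $\dim_K \Cent X = d F(\lambda)$ in the primary case where $X$ has cycle type $f^\lambda$ with $\deg f = d$.

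Next I would regard $V$ as a $K[x]$-module with $x$ acting as $X$. By the definition of the partition invariant $\lambda = (\lambda^{(1)}, \lambda^{(2)}, \ldots)$, we have an isomorphism of $K[x]$-modules
\[ V \;\cong\; \bigoplus_i K[x]/(f^{\lambda^{(i)}}). \]
The centralizer of $X$ is precisely $\End_{K[x]}(V)$, so
\[ \Cent X \;\cong\; \bigoplus_{i,j} \Hom_{K[x]}\bigl(K[x]/(f^{\lambda^{(i)}}),\, K[x]/(f^{\lambda^{(j)}})\bigr). \]
The key calculation is the elementary fact that, for positive integers $a,b$,
\[ \dim_K \Hom_{K[x]}\bigl(K[x]/(f^a), K[x]/(f^b)\bigr) = d\,\min(a,b), \]
which follows because such a homomorphism is determined by the image of~$1$, and that image must be an element of $K[x]/(f^b)$ annihilated by $f^a$; these elements form the submodule $f^{\max(b-a,0)} K[x]/(f^b)$, which has $K$-dimension $d \min(a,b)$.

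Summing over $i,j$ gives $\dim_K \Cent X = d \sum_{i,j} \min(\lambda^{(i)}, \lambda^{(j)})$. Collecting parts by size, so that $\lambda$ has $m_k$ parts equal to $k$, this double sum rearranges to $\sum_{j,k} \min(j,k) m_j m_k = F(\lambda)$, proving the primary case. Combining with the direct sum decomposition yields the stated formula. I do not expect any real obstacle: the only step requiring care is the Hom calculation, and this is a routine analysis of cyclic $K[x]$-modules.
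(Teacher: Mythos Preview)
Your proof is correct and follows essentially the same approach as the paper: both reduce to the primary case via the primary decomposition, then compute $\Cent X = \End_{K[x]}(V)$ by decomposing $V$ into cyclic summands and counting where generators can be sent. The paper phrases this last step in terms of the ``height'' of a vector (the image of a cyclic generator of height $h$ must lie in the subspace of vectors of height at most $h$), whereas you compute $\Hom_{K[x]}(K[x]/(f^a),K[x]/(f^b))$ pairwise, but these are the same calculation rearranged.
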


\begin{proof}
Let $V = K^n$. Since the subspaces corresponding to the primary decomposition of $X$
are preserved by $\Cent X$, we may reduce to the case where $X$ is a primary matrix
of cycle type $f^\lambda$. Let the degree of  $f$ be $d$.

Given a vector $v \in V$ we say that $v$ has \emph{height} $h \in \N$
if \hbox{$f(X)^{h-1}v \not=0$} and $f(X)^h v = 0$.
Let $V = \bigoplus_{i=1}^r U_i$ be a direct sum decomposition of~$V$ into indecomposable
$X$-invariant subspaces such that the dimension of $U_i$ is equal to the $i$th part of
$\lambda$. Let $u_i$ be a cyclic vector
generating $U_i$. If $h$ is a part of $\lambda$ then the images
of the $m_h$ cyclic vectors of height $h$ can be chosen freely from the subspace of $V$
of vectors of height at most $h$. This subspace has dimension
\[ d \bigl( h \sum_{j \ge h} m_j + \sum_{k < j} k m_k \bigr). \]
The proposition now follows by a straightforward counting argument.
\end{proof}

As a corollary, we see that the dimension of the centralizer of a matrix depends on
the field of definition only through the information captured by its type.

In the special case of nilpotent matrices this proposition is well known.
For two equivalent formulations
see Propositions 3.1.3 and 3.2.2 in \cite{OMeara}. The first implies that
$F(\lambda) = \sum (2i-1)\ell_i$, where $\ell_i$ is the $i$th part of $\lambda$;
the second, which is originally due to Frobenius, gives
$F(\lambda) = \sum {\ell_i'}^2$, where $\ell'_i$ is the $i$th part of the conjugate
partition to $\lambda$.

\section{Matrices with conjugate centralizers}\label{s:if}

The aim of the remainder of this section is to
prove Theorem~\ref{thm:polytypes} and hence the `if' direction of Theorem~\ref{thm:iff}.

\begin{proposition}\label{prop:polymult}
Let $X$ be nilpotent of class $f^\lambda$, where $f$ has degree $d$
and $\lambda$ is a partition with at least one part
of size greater than $1$. Let $r(x)$ be a polynomial. Then $r(X)\in N(d\lambda)$ if and only if $r(x)$ is divisible by
$f(x)$ but not by $f(x)^2$.
\end{proposition}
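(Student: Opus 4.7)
The plan is to parametrise $r$ by its $f$-adic valuation. Write $r(x) = f(x)^m s(x)$ with $\gcd(s, f) = 1$ and $m \ge 0$; since Proposition~\ref{prop:nilpclass} already furnishes $f(X) \in N(d\lambda)$, what is needed is precisely that $r(X)$ lies in the same similarity class as $f(X)$ if and only if $m = 1$. I would first record that the minimum polynomial of $X$ is $f^{\lambda_1}$, where $\lambda_1$ is the largest part of $\lambda$. Two observations follow at once: $s(X)$ is invertible, since Bezout gives $as + b f^{\lambda_1} = 1$ in $K[x]$ and evaluating at $X$ yields $a(X) s(X) = I$; and, since $s(X)$ commutes with $X$, the identity $r(X)^k = f(X)^{mk} s(X)^k$ gives $\ker r(X)^k = \ker f(X)^{mk}$ for every $k \ge 0$.

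The case $m = 0$ is immediate: the same Bezout argument applied to $r$ in place of $s$ shows that $r(X)$ is invertible, so it is not nilpotent and cannot belong to $N(d\lambda)$. For $m = 1$, the identities $\ker r(X)^k = \ker f(X)^k$ show that $r(X)$ and $f(X)$ have identical Jordan type, since the Jordan type of a nilpotent matrix is recorded by the dimensions of the kernels of its powers. By Proposition~\ref{prop:nilpclass} this type is $d\lambda$, so $r(X) \in N(d\lambda)$.

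For $m \ge 2$, the nilpotency index of $r(X)$ is the least $k$ with $mk \ge \lambda_1$, namely $\lceil \lambda_1/m \rceil$. The hypothesis that some part of $\lambda$ exceeds~$1$ gives $\lambda_1 \ge 2$, so $\lceil \lambda_1/m \rceil \le \lceil \lambda_1/2 \rceil < \lambda_1$. But every matrix in $N(d\lambda)$ has nilpotency index $\lambda_1$, the largest part of $d\lambda$. Hence $r(X) \notin N(d\lambda)$, and this is the only remaining possibility. I do not expect any real obstacle; the one conceptual point worth flagging is the role of the assumption $\lambda_1 \ge 2$, which is exactly what ensures $\lceil \lambda_1/2 \rceil < \lambda_1$ and so separates the behaviour of $m = 1$ from $m \ge 2$.
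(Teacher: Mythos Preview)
Your proof is correct and follows essentially the same route as the paper: write $r = f^m s$ with $\gcd(s,f)=1$, use invertibility of $s(X)$ to identify $\ker r(X)^k$ with $\ker f(X)^{mk}$, and then separate the cases $m=0$, $m=1$, $m\ge 2$. The only cosmetic difference is that for $m\ge 2$ you argue via the nilpotency index $\lceil \lambda_1/m\rceil$, whereas the paper simply notes that $r(X)$ is similar to $f(X)^m$ and that this lies outside $N(d\lambda)$ once $\lambda$ has a part exceeding~$1$; these are the same observation.
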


\begin{proof}
It is clear that $r(X)$ is nilpotent if and only if $f(x)$ divides $r(x)$.
Let $r(x)=g(x)f(x)^a$ where
$g(x)$ is coprime to $f(x)$.
Since $g(X)$ is invertible, and commutes with $f(X)^a$, we see that the dimensions
of the kernels of $r(X)^i$ and $f(X)^{ai}$ are the same for all $i$. Since these dimensions determine the similarity class of $X$, it follows
that $r(X)$ is similar to $f(X)^a$. By Proposition~\ref{prop:nilpclass}
we have $f(X) \in N(d \lambda)$. Hence if $a = 1$ then $r(X) \in N(d\lambda)$,
while if $a > 1$ then $r(x) \not\in N(d\lambda)$, since $\lambda$ has a part of size 
greater than~$1$.
\end{proof}

Let $X$ be a matrix over a field $K$. Recall that an additive Jordan--Chevalley decomposition of $X$ is a
decomposition $X=S+N$, where $S$ and $N$ are matrices over $K$ such that $S$ is semisimple, $N$ is nilpotent, and
$SN=NS$. If a Jordan--Chevalley decomposition of $X$ exists then it is unique, and both $S$ and $N$ are polynomial
in $X$. Over a perfect field, every matrix admits a Jordan--Chevalley decomposition, and the proof of
\cite[Theorem 2.6]{BW_GLClasses} (in which the field is finite) relies on this fact. Over an arbitrary field these
decompositions do not generally exist; but the following lemma allows us to compensate for their lack.

\begin{lemma}\label{lemma:JC}
Let $X$ be a primary matrix over a field $K$ of cycle type
$f^\lambda$. Let $r$ be a polynomial over $K$ such that $r(X)$
has class $f^{\mu}$. If $\mu\neq \lambda$, then~$X$ has a Jordan--Chevalley decomposition over $K$.
\end{lemma}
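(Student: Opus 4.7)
The plan is to show that the hypothesis $\mu \ne \lambda$ forces the irreducible polynomial $f$ to be separable, from which the Jordan--Chevalley decomposition is delivered by Hensel's lemma applied inside the local Artinian $K$-algebra $K[X] \cong K[x]/(f^{\lambda_1})$. A decomposition $X = S + N$ amounts to finding $S \in K[X]$ with $f(S) = 0$ and $S \equiv X \pmod{f(X)}$; the Newton iteration $S_{n+1} = S_n - f(S_n)/f'(S_n)$ starting from $S_0 = X$ terminates within $\lambda_1$ steps because $(f(X))^{\lambda_1} = 0$, provided $f'(X)$ is a unit modulo the maximal ideal $(f(X))$---that is, provided $f$ is separable. (The case $\lambda_1 = 1$ is trivial since $X$ is then already semisimple.)

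To prove that $\mu \ne \lambda$ implies $f$ is separable, I would first pin down the combinatorial relation between $\mu$, $\lambda$, and $r$. Factor $f(r(x)) = f(x)^a g(x)$ in $K[x]$ with $\gcd(f, g) = 1$; then $f(r(X)) = f(X)^a g(X)$ with $g(X)$ a unit in $K[X]$, so $\dim_K \ker f(r(X))^k = \dim_K \ker f(X)^{ak}$ for every $k \ge 1$. Using Proposition~\ref{prop:nilpclass} to identify the Jordan types of $f(X)$ and $f(r(X))$ as $d\lambda$ and $d\mu$ respectively, and translating the kernel-dimension equalities into conjugate-partition terms, one obtains
\[
\mu'_k \;=\; \lambda'_{a(k-1)+1} + \lambda'_{a(k-1)+2} + \cdots + \lambda'_{ak} \qquad (k \ge 1).
\]
Evaluating at $k = 1$, the bound $\lambda'_2 \ge 1$ (which holds because $\lambda_1 \ge 2$) forces $a = 1$ whenever $\mu = \lambda$; hence $\mu \ne \lambda$ forces $a \ge 2$, equivalently $f^2 \mid f(r)$ in $K[x]$.

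The final step is to show that $f^2 \mid f(r)$ is incompatible with $f$ inseparable. Over $\bar K$ write $f(x) = \prod_\gamma (x - \gamma)^{p^e}$, the product running over the distinct roots $\gamma$ of $f$, with $p^e > 1$ the inseparable degree. Then $f(r(x)) = \prod_\gamma (r(x) - \gamma)^{p^e}$, and $f^2 \mid f(r)$ forces each $\gamma$ to be at least a double root of $r(x) - r(\gamma)$; equivalently $r'(\gamma) = 0$ for every distinct root of $f$. Because $r$ permutes the finite set of distinct roots of $f$, some iterate $R := r^{\circ k}$ fixes each root, so $f \mid R(x) - x$ in $K[x]$; write $R(x) = x + f(x) s(x)$. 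Differentiating and using $f' = 0$ (inseparability of $f$), we find $R'(\gamma) = 1$; yet the chain rule gives $R'(\gamma) = \prod_{i=0}^{k-1} r'(r^{\circ i}(\gamma)) = 0$, since each argument $r^{\circ i}(\gamma)$ is itself a root of $f$. This contradiction forces $f$ to be separable, completing the argument.

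The principal obstacle is the combinatorial identity that pins down $\mu$ in terms of $\lambda$ and $a$ via the powers-of-$f(X)$ kernel comparison; once this is in hand, both the Hensel lifting and the derivative-chain-rule manoeuvre are short and formal.
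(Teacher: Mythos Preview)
Your argument is correct, and it takes a genuinely different route from the paper. Both proofs begin by extracting from $\mu\neq\lambda$ (and $\lambda_1\ge 2$) the divisibility $f^2\mid f\circ r$; the paper does this via Proposition~\ref{prop:polymult}, you via the explicit conjugate-partition formula, and these are essentially the same computation. After that the paths diverge. The paper iterates $r$: from $f^2\mid f\circ r$ one gets $f^{2^a}\mid f\circ r^{(a)}$, so for large enough $a$ the matrix $S=r^{(a)}(X)$ satisfies $f(S)=0$; choosing $a$ also so that $r^{(a)}$ fixes each root of $f$, one checks directly that $N=X-S$ is nilpotent (the eigenvalues cancel). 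This produces the Jordan--Chevalley decomposition constructively, without ever asking whether $f$ is separable. Your approach instead proves the sharper fact that $f$ \emph{must} be separable---the chain-rule computation $R'(\gamma)=\prod r'(r^{\circ i}(\gamma))=0$ versus $R'(\gamma)=1+f(\gamma)s'(\gamma)=1$ is a neat contradiction---and then delegates the construction of $S$ to the standard Hensel/Newton lift. So you obtain more information (separability of $f$), while the paper's proof is more self-contained and uses $r$ itself to build the semisimple part.

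One small expository slip: you write ``forces $a=1$ whenever $\mu=\lambda$; hence $\mu\neq\lambda$ forces $a\ge 2$'', but that is not the contrapositive. What you actually need, and what your formula gives immediately, is the implication $a=1\Rightarrow\mu=\lambda$ (since then $\mu'_k=\lambda'_k$ for all $k$). The bound $\lambda'_2\ge 1$ is used for the other direction, which you do not need here.
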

\begin{proof}
If all parts of $\lambda$ are equal to $1$, then $X$ is semisimple, and has
an obvious 
Jordan--Chevalley decomposition.
So we suppose that $\lambda$ has a part greater than $1$.

Let $d$ be the degree of $f$. Since $r(X)$ has class $f^\mu$, we see from Proposition \ref{prop:nilpclass} that
$(f\circ r) (X)$ is nilpotent and lies in the similarity class $N(d\mu)$.
It follows that $f$ divides $f\circ r$. Let $f\circ r = gf$ for
some polynomial $g$. If $g$ is coprime with $f$, then by
Proposition~\ref{prop:polymult} we see that $gf(X)$ is the
same nilpotent class as $f(X)$, and so we have $\mu=\lambda$.

Suppose, then, that $g$ is divisible by $f$, and so $f\circ r = hf^2$ for some polynomial~$h$.
Observe that
\[
f\circ(r\circ r) = (f\circ r)\circ r = hf^2 \circ r = (h\circ r)(f\circ r)^2 = (h\circ r) h^2f^4.
\]
Similarly, writing $r^{(a)}$ for the $a$-th power of $r$ under composition, we see that $f\circ r^{(a)}$ is divisible
by $f^{2^a}$. So for sufficiently large $a$, we have $(f\circ r^{(a)})(X)=0$.

Let $L$ be a splitting field for $f$ over $K$. Notice that the polynomial $r$ acts on the roots of $f$ in
$L$ by permuting them,
since these roots are the eigenvalues of both $X$ and~$r(X)$.
We may suppose (by increasing
$a$ as necessary) that $r^{(a)}$ fixes each root of $f$. Then certainly $r^{(a)}(X)\neq 0$, and since $f\bigl(r^{(a)}(X)\bigr) = 0$, it follows that
$S=r^{(a)}(X)$ is a semisimple matrix with minimum polynomial $f$. But since any eigenvector of $X$ over $L$
is an eigenvector of $S$ with the same eigenvalue, we see that $N=X-S$
must be
nilpotent. So we have found a Jordan--Chevalley decomposition $S+N$ for~$X$.
\end{proof}

\begin{theorem}\label{thm:polytypes}
Let $K$ be a field, and let $X$, $Y\in\Mat_d(K)$. Then $X$ and $Y$ have the same
generalized type if and only if there exist polynomials
$p$ and~$q$ such that $p(X)$ is similar to $Y$ and $q(Y)$ is similar to $X$.
\end{theorem}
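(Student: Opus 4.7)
The proof will split into the two implications.

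For $(\Leftarrow)$, suppose $p(X)\sim Y$ and $q(Y)\sim X$. Decompose $X=\bigoplus_i X_i$ and $Y=\bigoplus_j Y_j$ into primary components, with cycle types $f_i^{\lambda_i}$ and $g_j^{\nu_j}$ respectively. Each $p(X_i)$ is itself primary (since $K[X_i]$ is a local algebra), so $p(X)=\bigoplus_i p(X_i)$ is a sum of primary matrices with irreducibles $h_i$, say. I first rule out ``merging''. Applying $q$ to a primary matrix yields a primary matrix whose irreducible depends only on $q$ and on the original irreducible; hence the number of primary components of $q(p(X))=\bigoplus_i q(p(X_i))$ is at most the number of distinct $h_i$. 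Since $q(p(X))\sim X$ must have exactly as many primary components as $X$, the $h_i$ are all distinct, and $p$ induces a bijection $\sigma$ with $p(X_i)\sim Y_{\sigma(i)}$. Proposition~\ref{prop:dominance} applied via $p$ and via $q$ then sandwiches $\deg f_i = \deg g_{\sigma(i)}$ and $\lambda_i = \nu_{\sigma(i)}$. Finally, for any root $\alpha$ of $f_i$ we have $p(\alpha)\in K(\alpha)$ and $p(\alpha)$ is a root of $g_{\sigma(i)}$; equality of degrees forces $K(p(\alpha))=K(\alpha)$, so $f_i\sim g_{\sigma(i)}$.

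For $(\Rightarrow)$, assume $X,Y$ have the same generalized type; by symmetry it suffices to produce $p$ with $p(X)\sim Y$. Equality of generalized types yields a bijection between $\Phi_X$ and $\Phi_Y$ preserving equivalence class and partition invariant, so the Chinese remainder theorem lets me construct $p$ one primary component at a time. This reduces to the primary case: $X$ has cycle type $f^\lambda$ and $Y$ has cycle type $g^\lambda$ with $f\sim g$. Choose roots $\alpha$ of $f$ and $\beta$ of $g$ in a splitting field satisfying $K(\alpha)=K(\beta)$, and write $\beta = r(\alpha)$ with $r\in K[x]$. I claim $r(X)\sim Y$. The key is the divisibility $f\mid g\circ r$ in $K[x]$, which holds because $(g\circ r)(\alpha)=g(\beta)=0$. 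If $t$ is the largest part of $\lambda$, then $f^t\mid (g\circ r)^t$, so $g^t(r(X))=0$; combined with $g$ dividing any polynomial that annihilates $r(X)$ (because $r(\alpha)=\beta$ has minimum polynomial $g$), the minimum polynomial of $r(X)$ equals $g^s$ for some $s\le t$. The characteristic polynomial is also a power of $g$: for any root $\gamma$ of $f$, evaluating $f\mid g\circ r$ at $\gamma$ gives $g(r(\gamma))=0$, so every eigenvalue of $r(X)$ is a root of $g$. Thus $r(X)$ is primary with cycle type $g^\mu$ for some partition $\mu$.

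It remains to identify $\mu$ with $\lambda$. Proposition~\ref{prop:dominance} gives $\mu\unlhd\lambda$, while $\Cent X\subseteq\Cent r(X)$ together with Proposition~\ref{prop:centdim} gives $F(\lambda)\le F(\mu)$. Using Frobenius's formula $F(\nu)=\sum_i(\nu'_i)^2$ and strict Schur-convexity of the sum of squares, $\mu\unlhd\lambda$ combined with $F(\mu)\ge F(\lambda)$ forces $\mu=\lambda$, so $r(X)\sim Y$.

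\textbf{Main obstacle.} The introduction signals that Lemma~\ref{lemma:JC} is deployed here to handle the inseparable case via a Jordan--Chevalley dichotomy. The dimension-theoretic route above sidesteps Jordan--Chevalley entirely by routing through $F$ and Schur-convexity; if one instead preferred to build $p$ from a semisimple-plus-nilpotent decomposition of $X$, Lemma~\ref{lemma:JC} would become essential in precisely the cases where no such decomposition exists, by providing rigidity of polynomial images. The delicate technical step in the route above is the Schur-convexity claim, which requires translating dominance of $\mu$ by $\lambda$ into majorization of the conjugate partitions and invoking strict convexity of the sum of squares.
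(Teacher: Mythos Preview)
Your $(\Leftarrow)$ direction is sound and matches the paper's approach closely.

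The $(\Rightarrow)$ direction, however, has a genuine gap in the final step. You claim that $\mu\unlhd\lambda$ together with $F(\mu)\ge F(\lambda)$ forces $\mu=\lambda$ by strict Schur-convexity. But these two hypotheses are not independent: the inequality $F(\mu)\ge F(\lambda)$ is already a \emph{consequence} of $\mu\unlhd\lambda$. Indeed, $\mu\unlhd\lambda$ is equivalent to $\lambda'\unlhd\mu'$, and Schur-convexity of the sum of squares then yields $\sum(\lambda'_i)^2\le\sum(\mu'_i)^2$, i.e.\ $F(\lambda)\le F(\mu)$. So the centralizer inclusion $\Cent X\subseteq\Cent r(X)$ contributes nothing new, and you have no reverse inequality with which to sandwich. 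Concretely, $\lambda=(2)$ and $\mu=(1,1)$ satisfy $\mu\unlhd\lambda$ and $F(\mu)=4>2=F(\lambda)$, yet $\mu\neq\lambda$. What strict Schur-convexity actually gives is that $\mu\unlhd\lambda$ and $F(\mu)\le F(\lambda)$ would force equality; but you have the inequality the wrong way round.

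This is precisely the obstruction the paper's Lemma~\ref{lemma:JC} is designed to overcome. The paper does not claim that $r(X)$ is automatically similar to $Y$; rather, it introduces the inverse polynomial $s$ with $s(\beta)=\alpha$, obtains a chain $\lambda\unrhd\mu\unrhd\nu$ for the types of $X$, $r(X)$, $(s\circ r)(X)$, and then splits into cases. If $\nu=\lambda$ the chain collapses and $r$ works directly. If $\nu\neq\lambda$, Lemma~\ref{lemma:JC} supplies a Jordan--Chevalley decomposition $X=S+N$, and the correct polynomial is not $r$ but the one sending $X$ to $r(S)+N$; the companion polynomial built from $s$ then returns $X$ exactly, and Proposition~\ref{prop:dominance} closes the sandwich. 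Your dimension-theoretic shortcut does not supply the missing direction, so the appeal to Lemma~\ref{lemma:JC} (or some genuine substitute) cannot be avoided.
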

\begin{proof}
This is the generalization to an arbitrary field of \cite[Theorem 2.6]{BW_GLClasses}, and only part of the proof is
complicated by the necessity of appealing to Lemma~\ref{lemma:JC}. We shall therefore present the unaffected parts of
the argument very concisely, referring the reader to our earlier paper for a
gentler exposition.

We  show first that if $X$ and $Y$ have the same generalized type then
there exists a polynomial $p$ such that $p(X)$ is similar to
$Y$. By an appeal to the Chinese Remainder Theorem, we see that it is enough to prove the result in the case that~$X$ a
primary matrix of cycle type $f^\lambda$ for some irreducible polynomial $f$
and some partition $\lambda$. By hypothesis there exists an irreducible
polynomial $g$ with $f \sim g$, such that $Y$ has cycle type $g^\lambda$.

Let $\alpha$  be a root of $f$ in an extension field of $K$ in which
$f$ and $g$ split. Since \hbox{$f \sim g$} there exists a root $\beta$ of $g$
and  polynomials $r$ and $s$ over $K$
such that $r(\alpha)=\beta$ and $s(\beta)=\alpha$.
Now if $\alpha'$ is any root of $f$ then, since $\alpha$ is sent to $\alpha'$
by an automorphism of $L$ fixing $K$, we see that $r(\alpha')$ is a root of $g$
and $s(r(\alpha')) = \alpha'$. It follows that
$r(X)$ has class $g^\mu$ for some partition $\mu$ and $(s\circ r)(X)$
has class $f^\nu$ for some partition $\nu$.
Since $(s \circ r)(X)$
is polynomial in $r(X)$, it follows from Proposition~\ref{prop:dominance}
that $\lambda \unrhd \mu \unrhd \nu$.

Suppose that $\lambda = \nu$. Then
the classes $f^\lambda$ and $g^\lambda$ are polynomial in one another,
witnessed by the polynomials $r$ and $s$.

Suppose, on the other hand, that $\nu\neq\lambda$.
Then by Lemma \ref{lemma:JC}, the matrix $X$ has a Jordan--Chevalley
decomposition $X=S+N$. It is now easy to see that
$r(S)+N$ is the Jordan--Chevalley decomposition for some matrix
$Y'$ belonging to the class $g^\pi$ for some partition $\pi$. Since both $S$ and $N$ are polynomials in $X$, we have that 
$r(S)+N$ is a polynomial in $X$. Similarly, we see that
$(s\circ r)(S)+N$ is polynomial in $Y'$. Since
$s\circ r$ fixes the eigenvalues of $X$
we must have $(s\circ r)(S)=S$, and so $X$ is polynomial
in $Y'$. But now it follows from
Proposition \ref{prop:dominance} that $\lambda=\pi$, and so the classes $f^\lambda$
and $g^\lambda$ are polynomial in one another in this case too.

Conversely, suppose that $p(X)$ is similar to $Y$ and $q(Y)$ similar to $X$. Since the number of 
summands
in the primary decomposition of $p(X)$ is at most the number in that of $X$, we see that the primary decomposition of
$X$ and $Y$ have the same number of summands. Let $X_f$ be the summand of $X$ corresponding to the polynomial $f$, and
let $Y_g$ be the summand of $Y$ similar to $p(X_f)$, corresponding to the polynomial $g$. Since $p$ sends the eigenvalues of $X$ (in a suitable extension field)
to eigenvalues of $Y$, it is clear that $K(\alpha)$ embeds into $K(\beta)$.
By symmetry we have $K(\alpha) = K(\beta)$ and so $f \sim g$.
Now it follows from Proposition \ref{prop:dominance} that the partition invariants $\lambda_f$ of $X$ and
$\lambda_g$ of $Y$ are the same. So $X$ and $Y$ have the same type.
\end{proof}

We now
obtain one half of Theorem~\ref{thm:iff}.

\begin{proof}[Proof of `if' direction of Theorem~\ref{thm:iff}]
By Theorem \ref{thm:polytypes} there exist polynomials 
$p$ and $q$ such that $p(X)$ is similar to~$Y$ and~$q(Y)$ is
similar to $X$. Now $\Cent X$ is a subalgebra of $\Cent p(X)$ and so $\Cent X$ is
conjugate to a subalgebra of $\Cent Y$. Similarly $\Cent Y$ is a subalgebra of $\Cent q(Y)$,
and so $\Cent Y$ is conjugate to a subalgebra of $\Cent X$.
It follows from considering the dimensions of these subalgebras that
$\Cent X=\Cent p(X)$ and that $\Cent Y=\Cent q(Y)$.
\end{proof}

\section{Recognizing the generalized type of a matrix from its centralizer}\label{s:onlyif}

Throughout this section we let $K$ be a field.
Let $X$ and $Y$ be matrices  in $\Mat_n(K)$ with
conjugate centralizer algebras. By replacing
$Y$ with an appropriate conjugate,
we may assume that in fact $\Cent X$ and $\Cent Y$ are equal.
We shall show that $X$ and $Y$ have the same generalized type.

The proof proceeds by a series of reductions. We first prove the result
for nilpotent matrices, then for primary matrices, and finally, for
general matrices.

\begin{lemma}\label{nilpotent}
If $M$ and $N$ are nilpotent matrices, and $\Cent M = \Cent N$,
then $M$ and $N$ are conjugate by an element of
$\GL_n(K)$. \end{lemma}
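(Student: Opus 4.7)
The plan is to show that $M$ and $N$ have the same nilpotent partition type $\lambda$; this suffices, since two nilpotent matrices of equal type are always conjugate in $\GL_n(K)$ by the Jordan normal form theorem (which is available over any field when the only eigenvalue is $0$).

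The crucial step is to establish that $N$ is a polynomial in $M$. Indeed, $N$ lies in $\Cent N = \Cent M$, and for every $X \in \Cent M = \Cent N$ the defining property of $\Cent N$ gives $XN = NX$; thus $N$ lies in the centre $Z(\Cent M)$. By the classical double centralizer theorem for a single matrix---the assertion that $\Cent\Cent M = K[M]$ inside $\Mat_n(K)$---the centre $Z(\Cent M)$ coincides with $K[M]$. Hence $N = p(M)$ for some $p \in K[x]$, and symmetrically $M = q(N)$ for some $q \in K[x]$.

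With $N = p(M)$ in hand, Proposition~\ref{prop:dominance} applied to the primary matrix $M$ (for which $d = 1$, since $M$ is nilpotent) and the polynomial $p$ tells us that $N$ has type $e^\mu$ with $e$ dividing $1$, forcing $e = 1$ and $\mu \unlhd \lambda$. A symmetric application to $M = q(N)$ gives $\lambda \unlhd \mu$, so $\lambda = \mu$, as required.

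The principal obstacle is invoking the double centralizer theorem, which, although classical, is not established in the paper. If one prefers to avoid citing it, a direct proof of $Z(\Cent M) \subseteq K[M]$ in the nilpotent case can be given by decomposing $V$ as a direct sum of cyclic $K[M]$-submodules, describing $\Cent M = \End_{K[M]}(V)$ block-wise, and identifying its centre with the scalar action of $K[M]$.
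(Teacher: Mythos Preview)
Your argument is correct and takes a genuinely different route from the paper. The paper reads the partition of $M$ directly off the $\Cent M$-module structure of $V$: citing Propositions~3.4 and~3.5 of \cite{BW_GLClasses}, it observes that if the partition of $M$ has $m_h$ parts of size $h$, then $V$ as a module for $A=\Cent M$ has, for each $h$ with $m_h>0$, a composition factor of dimension $m_h$ occurring with multiplicity exactly $h$, and these exhaust the composition factors; so the partition is an invariant of the algebra $A$ together with its action on $V$. Your approach instead passes through the double centralizer theorem to get $N\in Z(\Cent M)=K[M]$ and symmetrically $M\in K[N]$, then invokes Proposition~\ref{prop:dominance} twice to force $\lambda=\mu$. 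Your method has the advantage of staying within the paper's own toolkit (modulo the classical bicommutant fact, which as you note is standard over any field) and is pleasingly parallel to the dominance argument in Theorem~\ref{thm:polytypes}; the paper's method, by contrast, yields the stronger structural statement that the partition is visible in the composition series of $V$ over $\Cent M$, at the cost of importing results from \cite{BW_GLClasses}.
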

\begin{proof}
We use results from Section 3 of \cite{BW_GLClasses}. Let $A=\Cent M$.
Let the partition associated with $M$ have
$m_h$ parts of size $h$ for each $h\in\N$. By Propositions 3.4 and 3.5 of \cite{BW_GLClasses}, for each $h$ such that
$m_h>0$, the $A$-module $V$ has a composition factor of dimension $m_h$ which appears with multiplicity $h$; these are
all of the composition factors of $V$. Thus the
similarity class of $M$ can be recovered from a composition series for $V$.
\end{proof}

\begin{lemma}\label{doublePD} Suppose that $X$, $Y\in\Mat_n(K)$ have equal
 centralizers. Then the primary decompositions
of $V$ as a $K\langle X\rangle$-module and as an $K\langle Y\rangle$-module
have the same subspaces of $V$ of summands.
\end{lemma}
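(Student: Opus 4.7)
The plan is to reduce the lemma to the statement that $K\langle X\rangle$ and $K\langle Y\rangle$ are equal as subalgebras of $\Mat_n(K)$. Once that identity is in hand, $V$ is simultaneously a module over a single commutative Artinian ring $R = K\langle X\rangle = K\langle Y\rangle$, and both primary decompositions of $V$ are instances of the canonical primary decomposition of $V$ as an $R$-module, which is intrinsic to $R$ and to the $R$-module structure of $V$.

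To establish the reduction, I would invoke the classical double centralizer identity $\Cent(\Cent Z) = K\langle Z\rangle$, which holds for every matrix $Z$ over an arbitrary field. The trivial inclusion $K\langle Z\rangle \subseteq \Cent(\Cent Z)$ is clear; the reverse inclusion can be seen from the rational canonical form, by choosing a cyclic summand of $V$ of dimension equal to the degree of the minimal polynomial of $Z$ and checking that any element of $\Cent(\Cent Z)$ acts on this summand as a polynomial in $Z$, from which its action on the whole of $V$ is then determined via projections belonging to $\Cent Z$. Since $K\langle X\rangle \subseteq \Cent X$, this identity gives
\[
K\langle X\rangle = \Cent(\Cent X) \cap \Cent X = Z(\Cent X),
\]
and similarly $K\langle Y\rangle = Z(\Cent Y)$. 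The hypothesis $\Cent X = \Cent Y$ then forces $K\langle X\rangle = K\langle Y\rangle$.

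Calling this common subalgebra $R$, I would then use the decomposition $R = \prod_i R_i$ of the commutative Artinian ring $R$ into local summands, furnishing primitive orthogonal idempotents $e_i \in R$ with $1 = \sum_i e_i$ and thus a direct sum decomposition $V = \bigoplus_i e_i V$. For each irreducible polynomial $f$ dividing the minimal polynomial of $X$ there is a unique index $i$ on whose local factor $f(X)$ is nilpotent, and the corresponding primary component of $V$ under $X$ is exactly $e_i V = \ker f(X)^n$ for $n$ large. The same description applies verbatim with $Y$ in place of $X$, so both primary decompositions partition $V$ into the same family of subspaces. The only ingredient that is not entirely formal is the double centralizer identity itself, but as this is a standard result in linear algebra there is no serious obstacle, and the proof should take no more than a few lines.
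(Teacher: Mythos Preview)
Your argument is correct, but it differs substantially from the paper's. The paper never invokes the double centralizer identity; instead it forms the simultaneous primary decomposition $V=\bigoplus_{f,g}V_{f,g}$ (using only that $X$ and $Y$ commute), and then shows directly that if some $f$-block for $X$ split across two distinct $g$-blocks for $Y$, one could build a $K\langle X\rangle$-endomorphism of $V$ carrying a vector from $V_{f,g_1}$ into $V_{f,g_2}$, giving an element of $\Cent X$ that visibly fails to preserve the $Y$-primary decomposition and hence lies outside $\Cent Y$.

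Your route is cleaner and in fact proves more: from $\Cent(\Cent Z)=K\langle Z\rangle$ you obtain $K\langle X\rangle = Z(\Cent X) = Z(\Cent Y) = K\langle Y\rangle$, so $Y$ is literally a polynomial in $X$ and vice versa. Combined with the converse direction of Theorem~\ref{thm:polytypes}, this already yields the entire ``only if'' half of Theorem~\ref{thm:iff}, making the remaining recognition results of Section~\ref{s:onlyif} (Lemma~\ref{nilpotent}, Proposition~\ref{prop:centext}, Proposition~\ref{prop:primary}) unnecessary for that purpose. What the paper's approach buys is self-containment: it avoids importing the bicommutant theorem as a black box and keeps the argument at the level of explicit module maps, consistent with the hands-on style of the surrounding sections.
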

\begin{proof} Since $X$ and $Y$ commute
we may form the simultaneous primary decomposition
\[\label{eq1}\tag{$\star$}
V=\bigoplus_{f,g}V_{f,g},
\]
where the direct sum is over pairs of irreducible polynomials in $K[x]$
and $V_{f,g}$ is the maximal subspace of $V$ on which
both $f(X)$ and $g(Y)$ have nilpotent restrictions.
Suppose that $V_{f,g_1}$ and $V_{f,g_2}$ are both non-trivial, where $g_1$ and $g_2$ are
distinct irreducible polynomials.
Let $v$ generate $V_{f,g_1}$ as a $K\langle f(X) \rangle$-module
and let $w$ be a vector in the kernel of the restriction of $f(X)$
to $V_{f,g_2}$. There is a $K\langle X \rangle$-endomorphism of $V$
that maps $v$ to $w$. Such an endomorphism corresponds to  matrix $Z \in \Cent X$
such that $V_{f,g_1}Z$ intersects non-trivially with $V_{f,g_2}$.
On the other hand, no such $Z$ can belong to $\Cent Y$;
this contradicts the assumption that
$\Cent X=\Cent Y$.

It follows that the decomposition ($\star$) is simply the primary decomposition of $V$ as a $K\langle X\rangle$-module.
The lemma follows by symmetry.
\end{proof}

To complete the proof in the primary case we need the following lemma
and proposition
describing
how the type and centralizer algebra
of a matrix change on field extensions.
Given a partition $\lambda$, let $\lambda \times p$
denote the partition obtained by multiplying all of the parts of $\lambda$ by $p$.

\begin{lemma}\label{lemma:insepcyclic}
Suppose that $K$ has prime characteristic $p$.
Let $X \in \Mat_n(K)$ be a primary matrix of cycle type $f^\lambda$ where
$f(x^p) \in K[x]$ is an inseparable irreducible polynomial. Let $L$ be an extension field
of $K$ containing the $p$th roots of the coefficients of $f$, and let $g \in L[x]$
be such that $g(x)^p = f(x^p)$. Then the cycle type of $X$ over $L$ is~$g^{\lambda \times p}$.
\end{lemma}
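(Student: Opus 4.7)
The plan is to prove the lemma in two steps: first establish that $g$ is irreducible over $L$, then use extension of scalars on the rational canonical form of $X$ to read off the cycle type over $L$. I take $L = K(b_1, \ldots, b_m)$ to be the extension of $K$ generated by the $p$-th roots $b_i$ of the coefficients $a_i$ of $f$; this is the intended minimal setting in which the claim is well-posed (for a larger extension $g$ could factor further).

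The key observation for the irreducibility of $g$ is that $L^p \subseteq K$: every element of $L$ is a $K$-linear combination of monomials in the $b_i$, so by additivity of Frobenius in characteristic $p$ its $p$-th power is a $K$-linear combination of monomials in the $b_i^p = a_i$, which lies in $K$. Suppose for contradiction that $g = g_1 g_2$ is a nontrivial factorization in $L[x]$. The coefficients of each $g_i^p$ are $p$-th powers of coefficients of $g_i$, hence lie in $L^p \subseteq K$, so $g_1^p, g_2^p \in K[x]$. But then $g_1^p \cdot g_2^p = g^p = f(x^p)$ is a nontrivial factorization of $f(x^p)$ in $K[x]$, contradicting the hypothesis that $f(x^p)$ is irreducible over $K$.

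With the irreducibility of $g$ established, the cycle type follows directly from extension of scalars. Over $K$ the module $V = K^n$ decomposes as $\bigoplus_i K[x]/(F^{t_i})$, where $F = f(x^p)$ and the $t_i$ are the parts of $\lambda$. Extending scalars gives $L \otimes_K V \cong \bigoplus_i L[x]/(F^{t_i}) = \bigoplus_i L[x]/(g^{p t_i})$, and since $g$ is irreducible over $L$ each summand is a cyclic $L[x]$-module with minimum polynomial $g^{p t_i}$, contributing a single part of size $p t_i$ to the partition invariant for $g$. The resulting partition is $\lambda \times p$, yielding the cycle type $g^{\lambda \times p}$ over $L$. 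The main obstacle is the first step; the scalar-extension calculation that follows it is routine.
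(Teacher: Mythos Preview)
Your argument is correct and uses the same extension-of-scalars computation as the paper (which first reduces to the cyclic case, an inessential difference): both obtain $V\otimes_K L \cong \bigoplus_i L[x]/(g^{pt_i})$ from $V \cong \bigoplus_i K[x]/(f(x^p)^{t_i})$. You go further than the paper by explicitly verifying that $g$ is irreducible over the minimal extension $L=K(b_1,\ldots,b_m)$ via the observation $L^p\subseteq K$, a point the paper's proof leaves unaddressed; your remark that the statement requires this minimal choice of $L$ is also well taken and matches how the lemma is actually applied later.
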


\begin{proof}
It is sufficient to prove the lemma when $X$ is cyclic and so $\lambda$ has a single
part. Suppose that $\lambda = (h)$. Let $V = K^n$ regarded as a $K\langle X \rangle$-module.
Since $V \cong K[x]/(f(x^p)^h)$, there is an isomorphismism of $L\langle X \rangle$-modules
\[ V \otimes_K L \cong \frac{K[x]}{\langle f(x^p)^h \rangle}
  \otimes_K L \cong \frac{L[x]}{\langle f(x^p)^h \rangle}
  = \frac{L[x]}{\langle g(x)^{hp} \rangle}. \]
Hence $X \otimes 1$ acts as a cyclic matrix on $V \otimes_K L$ with minimal polynomial
$g(x)^{hp}$. Therefore $X \otimes 1$ has cycle type $g^{(hp)}$, as required.
\end{proof}
\begin{proposition}\label{prop:centext}
Let $X \in \Mat_n(K)$ be a primary matrix of cycle type $f^\lambda$
and let $L$ be a splitting field for $f$.
Under the isomorphism between $\Mat_n(L)$ and $\Mat_n(K) \otimes L$,
the image of $\Cent_{\Mat_n(L)}X$ is \hbox{$\Cent_{\Mat_n(K)} X \otimes 1$}.
Moreover if $f$ has distinct roots $\alpha_1, \ldots, \alpha_d$ in $L$,
where each root of $f$ has multiplicity $p^a$,
then the cycle type of $X$,
regarded as an element of $\Mat_n(L)$, is
\[ (x-\alpha_1)^\lambda \ldots (x-\alpha_d)^\lambda \]
if $f$ is separable, and
\[ (x-\alpha_1)^{\lambda
\times p^a} \ldots (x-\alpha_d)^{\lambda \times p^a} \]
if $f$ is inseparable and each root of $f$ in $L$ has multiplicity $p^a$.
\end{proposition}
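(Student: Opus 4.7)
The plan is to establish the two claims separately. The centralizer equality is essentially a linear algebra fact that does not use the cycle type. The inclusion $\Cent_{\Mat_n(K)}X \otimes_K L \subseteq \Cent_{\Mat_n(L)} X$ is immediate once one identifies $\Mat_n(K) \otimes_K L$ with $\Mat_n(L)$. For the reverse inclusion, I would view $\Cent X$ as the solution space of the homogeneous linear system $XZ - ZX = 0$ in the $n^2$ entries of $Z$: the rank of the coefficient matrix is invariant under field extension, so $\dim_L \Cent_{\Mat_n(L)} X = \dim_K \Cent_{\Mat_n(K)} X$. Combined with the inclusion and the equality of $L$-dimensions, this forces $\Cent_{\Mat_n(K)} X \otimes_K L = \Cent_{\Mat_n(L)} X$.

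For the cycle type, I would decompose $V = K^n$ as a $K\langle X\rangle$-module into cyclic summands corresponding to the parts $h_1, h_2, \ldots$ of $\lambda$, so that $V \cong \bigoplus_j K[x]/(f(x)^{h_j})$. Extending scalars gives $V \otimes_K L \cong \bigoplus_j L[x]/(f(x)^{h_j})$ as $L\langle X \otimes 1\rangle$-modules, and the task reduces to finding the primary decomposition of each cyclic $L[x]$-module on the right. When $f$ is separable it factors over $L$ as $\prod_i (x-\alpha_i)$ with distinct $\alpha_i$, the factors $(x-\alpha_i)^{h_j}$ are pairwise coprime, and the Chinese Remainder Theorem gives $L[x]/(f^{h_j}) \cong \bigoplus_i L[x]/((x-\alpha_i)^{h_j})$. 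Re-grouping the summands by root attaches the partition $\lambda$ to each $\alpha_i$, which is the separable conclusion.

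The inseparable case I would reduce to the separable one by iterating Lemma~\ref{lemma:insepcyclic}. Writing $f(x) = \tilde{f}(x^{p^a})$ with $\tilde{f}$ separable irreducible, successive applications of the lemma through intermediate extensions containing $p$-th, $p^2$-th, $\ldots$, $p^a$-th roots of the coefficients of $\tilde{f}$ produce a separable irreducible polynomial $g$ over a subfield $K_a \subseteq L$ satisfying $g(x)^{p^a} = f(x)$, and with cycle type $g^{\lambda \times p^a}$ over $K_a$. The distinct roots of $g$ in $L$ are exactly $\alpha_1, \ldots, \alpha_d$, each contributing multiplicity $p^a$ to $f$, and applying the separable CRT step above to $g^{\lambda \times p^a}$ over $L$ yields the desired cycle type $\prod_i (x-\alpha_i)^{\lambda \times p^a}$. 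The principal technical obstacle is in this iteration: at each stage one must verify that the new cycle polynomial is still irreducible and has the form $h(x^p)$ so that Lemma~\ref{lemma:insepcyclic} continues to apply, and that each intermediate field is contained in $L$; the latter follows because the coefficients of $\tilde{f}$ are the $p^a$-th powers in $L$ of elementary symmetric functions in the $\alpha_i$.
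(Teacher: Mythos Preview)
Your proof is correct and takes a genuinely different, more elementary route than the paper's.

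For the centralizer equality, the paper does not use the rank-invariance argument. Instead it \emph{first} determines the cycle type of $X$ over $L$, and \emph{then} applies the dimension formula of Proposition~\ref{prop:centdim} on both sides to check that $\dim_K \Cent_{\Mat_n(K)} X = \dim_L \Cent_{\Mat_n(L)} X$; combined with the easy inclusion this gives equality. So in the paper the two assertions are proved together, with the centralizer claim depending on the cycle-type claim. Your argument decouples them: the observation that $\Cent X$ is the kernel of the $K$-linear map $Z \mapsto XZ-ZX$, whose rank is unchanged by base change, gives the dimension equality immediately and with no reference to types. This is cleaner, though it bypasses the paper's internal machinery.

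For the separable cycle type, the paper argues indirectly: by Galois symmetry the distinct roots $\alpha_i$ must all receive the same partition $\mu$, and then Proposition~\ref{prop:nilpclass} applied to $f(X)$ forces $d\mu = d\lambda$, hence $\mu = \lambda$. Your Chinese Remainder argument on each cyclic summand $L[x]/(f^{h_j}) \cong \bigoplus_i L[x]/((x-\alpha_i)^{h_j})$ reaches the same conclusion directly and avoids both the Galois step and Proposition~\ref{prop:nilpclass}.

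For the inseparable case both proofs pass through Lemma~\ref{lemma:insepcyclic}; the paper applies it in a single step with exponent $p^a$ (the proof of the lemma works verbatim with $p$ replaced by $p^a$), whereas you iterate. One remark: your CRT argument already handles the inseparable case without any appeal to Lemma~\ref{lemma:insepcyclic}. Over $L$ one has $f(x) = \prod_i (x-\alpha_i)^{p^a}$ with the $\alpha_i$ distinct, so the factors $(x-\alpha_i)^{p^a h_j}$ are pairwise coprime and CRT gives $L[x]/(f^{h_j}) \cong \bigoplus_i L[x]/\bigl((x-\alpha_i)^{p^a h_j}\bigr)$ directly. This sidesteps entirely the ``principal technical obstacle'' you flag about irreducibility at intermediate stages.
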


\begin{proof}
Clearly $\Cent_{\Mat_n(K)} X \otimes L$ is isomorphic to a
subalgebra of $\Cent_{\Mat_n(L)} X$.
We shall prove that the dimensions are the same, and at the same time
establish the other claims in the proposition.

Suppose first of all that $f$ is separable. Then $f$ factors
as $(x-\alpha_1)\ldots(x-\alpha_d)$ in $L[x]$.
Since the $\alpha_i$ are conjugate by automorphisms of $L$ fixing $K$,
there is a partition $\mu$
such that, over $L$, the cycle type of $X$ is $(x-\alpha_1)^\mu \ldots (x-\alpha_d)^\mu$.
Therefore $f(X)$, regarded as a matrix over $L$, lies in the similarity class $N(d\mu)$.
But by Proposition~\ref{prop:nilpclass}, we have $f(X) \in N(d\lambda)$, and so
$\lambda = \mu$. Proposition~\ref{prop:centdim} now implies that
\[ \dim_L \Cent_{\Mat_n(L)} X = d F(\lambda) = \dim_K \Cent_{\Mat_n(K)} X. \]

Now suppose that $f$ is inseparable. Let $K$ have prime characteristic $p$
and suppose that $f$ factors as $(x-\alpha_1)^{p^a}\ldots(x-\alpha_d)^{p^a}$
where $a \ge 1$ and the $\alpha_i$ are distinct. Let $g(x) = (x-\alpha_1)\ldots(x-\alpha_d)$.
Lemma~\ref{lemma:insepcyclic} implies that
the cycle type of $X$ over the field extension of $K$ generated by the coefficients of $g$
is $g^{\lambda \times p^a}$. Since $g$ is separable, it now follows that the cycle
type of $X$ over $L$ is $(x-\alpha_1)^{\lambda \times p^a} \ldots (x-\alpha_d)^{\lambda \times
p^a}$. Proposition~\ref{prop:centdim} implies that
\[ \dim_L \Cent_{\Mat_n(L)} X = d F(\lambda \times p^a) = dp^a F(\lambda) =
\dim_K \Cent_{\Mat_n(K)} X, \]
again as required.
\end{proof}

\begin{proposition}\label{prop:primary} Let $f$ and $g$ be irreducible polynomials over $K$. 
Let $X$, $Y\in\Mat_n(K)$ have cycle types $f^{\lambda}$ and $g^{\mu}$ respectively, and suppose that $\Cent X =\Cent Y$.
Then $f \sim g$ and $\lambda=\mu$.

\end{proposition}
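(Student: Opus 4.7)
My plan is to base change to a splitting field of $fg$, use Lemma~\ref{doublePD} and Proposition~\ref{prop:centext} to match the generalized eigenspaces of $X$ and $Y$, apply Lemma~\ref{nilpotent} on each common eigenspace to relate the nilpotent Jordan types, and then invoke the bicommutant identification $K[X] = K[Y]$ to pin down the residue fields, giving both $f\sim g$ and $\lambda=\mu$.

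Let $L$ be a splitting field of $fg$ over $K$. Proposition~\ref{prop:centext} extends readily to $L$ (it is established for any field containing the splitting field of the primary matrix in question, by the same dimension count), giving $\Cent_{\Mat_n(L)}X = \Cent X\otimes_K L$ and likewise for $Y$; since $\Cent X = \Cent Y$ we obtain $\Cent_{\Mat_n(L)}X = \Cent_{\Mat_n(L)}Y$. Lemma~\ref{doublePD} applied over $L$ then identifies the primary decompositions of $V\otimes L$ under $X$ and~$Y$: if $\alpha_1,\ldots,\alpha_d$ and $\beta_1,\ldots,\beta_{d'}$ are the distinct roots of $f$ and $g$ in $L$, then $d=d'$ and there is a bijection $\tau$ with $V_{\alpha_i} = W_{\beta_{\tau(i)}}$ for every $i$.

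On each common eigenspace the nilpotent matrices $X-\alpha_i I$ and $Y-\beta_{\tau(i)} I$ have equal centralizers, so Lemma~\ref{nilpotent} forces them to be conjugate. Their Jordan types, read off via Proposition~\ref{prop:centext}, therefore satisfy
\[\lambda\times p^a = \mu\times p^b,\]
where $p^a$ and $p^b$ are the inseparable degrees of $f$ and $g$ (with $p^0=1$ in the separable case).

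To close the argument I would invoke the bicommutant theorem $\Cent\Cent X = K[X]$, which together with $\Cent X = \Cent Y$ gives $K[X] = K[Y]$ as subalgebras of $\Mat_n(K)$. Since $X$ is primary, this is a commutative local $K$-algebra whose residue field $F = K[X]/\rad K[X]$ is $\cong K(\alpha)$ via the image of $X$ and simultaneously $\cong K(\beta)$ via the image of $Y$; so $K(\alpha)\cong K(\beta)$ as $K$-algebras. Embedding this common residue field into $L$ via each of its $K$-algebra homomorphisms then matches every distinct root of $f$ to a root of $g$ generating the same subfield of $L$, proving $f\sim g$. Isomorphic $K$-algebras share the same inseparable degree, so $p^a = p^b$, and the partition identity above collapses to $\lambda=\mu$. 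The main obstacle is the inseparable case: Lemma~\ref{nilpotent} alone only determines the partitions up to a factor of $p^{a-b}$, and the Galois action of $\Aut(L/K)$ cannot separate purely inseparable subextensions of $L/K$; the bicommutant identification of $K[X]$ with $K[Y]$ is what closes this gap.
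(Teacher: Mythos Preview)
Your argument is correct and parallels the paper's for most of its length: both base-change to a splitting field $L$ of $fg$, invoke Proposition~\ref{prop:centext} to transport the equality of centralizers to $L$, apply Lemma~\ref{doublePD} to match primary components (giving equal numbers of distinct roots), and then use Lemma~\ref{nilpotent} on each common generalized eigenspace to obtain $\lambda\times p^a=\mu\times p^b$.

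The substantive difference lies in how $f\sim g$ is established. The paper works over $K$ and examines the semisimple ``top layer'' $\im f(X)^{r-1}$, on which $X$ acts as a direct sum of copies of the companion matrix $C$ of $f$; the centre of the endomorphism algebra of this layer is identified with $K\langle C\rangle$, and equality of centralizers then forces $K\langle C\rangle=K\langle D\rangle$, whence $f\sim g$. You instead invoke the bicommutant theorem $\Cent\Cent X = K[X]$ to obtain $K[X]=K[Y]$ directly as local $K$-algebras, and then compare their common residue field, which is simultaneously $K(\alpha)$ and $K(\beta)$. Your route is shorter and more conceptual, at the cost of importing a classical result external to the paper; the paper's route is self-contained within the framework already set up. Your handling of the inseparable case---deducing $p^a=p^b$ from the $K$-isomorphism $K(\alpha)\cong K(\beta)$ and only then cancelling in $\lambda\times p^a=\mu\times p^b$ to get $\lambda=\mu$---is also tidier than the paper's ordering, which asserts $\lambda=\mu$ before having $f\sim g$ in hand.
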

\begin{proof} We shall work over a splitting field $L$ for the product
$fg$. By the first part of
Proposition~\ref{prop:centext} the centralizers of $X$ and $Y$
in $\Mat_n(L)$ are equal.

Let $f$ have distinct roots $\alpha_1,\ldots,\alpha_c$
and let $g$ has distinct roots $\beta_1,\ldots,\beta_d$ in $L$.
By Proposition~\ref{prop:centext}
if $K$ has characteristic zero then the cycle types of $X$ and $Y$
over $L$ are respectively
\begin{align*}
& (x-\alpha_1)^\lambda\cdots (x-\alpha_c)^\lambda,\\
& (x-\beta_1)^{\mu}\cdots (x-\beta_d)^{\mu},
\end{align*}
while if $K$ has prime characteristic $p$ then there exists $a,b\in \N_0$
such that the cycle types are respectively
\begin{align*}
& (x-\alpha_1)^{\lambda \times p^a}\cdots (x-\alpha_c)^{\lambda \times p^a},\\
& (x-\beta_1)^{\mu \times p^b}\cdots (x-\beta_d)^{\mu \times p^b}.
\end{align*}
Since $X$ and $Y$ have the same centralizer over $L$, it follows from Lemma \ref{doublePD} that their primary decompositions have the same number of summands, and so we have $c=d$ in
both cases.
Furthermore, the primary decompositions of $X$ and $Y$ over~$L$
have the same subspaces as summands. Let this decomposition be
$\bigoplus V_i$
where  $X$ has the eigenvalue $\alpha_i$ and $Y$ the eigenvalue $\beta_i$ on
$V_i$. Let $X_i$ and
$Y_i$ denote the restrictions of $X$ and $Y$ to $V_i$, respectively.
Then it is clear that
\[
\Cent X= \bigoplus_i\Cent X_i,\quad \Cent Y=\bigoplus_i\Cent Y_i,
\]
and since $\Cent X=\Cent Y$ it follows that $\Cent X_i=\Cent Y_i$
for all $i$. But $\Cent X_i=\Cent(X_i-\alpha_iI)$
and $\Cent Y_i=\Cent(Y_i-\beta_iI)$, and so the nilpotent matrices $X-\alpha_iI$ and $Y-\beta_iI$ have the same
centralizer; by Lemma \ref{nilpotent} they must be conjugate.
In the separable case $X-\alpha_iI$ has
the partition $\lambda$ and
$Y-\beta_iI$ has the partition $\mu$, and so we have $\lambda=\mu$, as required.
In the inseparable case $X-\alpha_i I$ has the partition $\lambda \times p^a$
and $Y-\beta_iI$ has the partition $\mu \times p^b$. Since
$c=d$ the partitions $\lambda$ and $\mu$ are partitions of the same number.
Hence we have $a=b$ and so $\lambda = \mu$, as required.

It remains to show that $f \sim g$. For this we shall work over the original
field~$K$.
Take $r \in \N$ such that 
$f(X)^{r-1} \not= 0$ and $f(X)^{r} = 0$. The action of $X$ on  $\im f(X)^{r-1}$
is semisimple, since it acts as a direct sum of copies
of the irreducible companion matrix $C$ of $f$.
The $X$-endomorphisms of this
subspace form a full matrix algebra with coefficients in
$K\langle C \rangle$. 
The centre of this algebra consists of
the diagonal matrices with coefficients in $K\langle C\rangle$.
Therefore $\Cent_{\Mat_n(K)} X$ determines $K\langle C \rangle$.
Hence we have $K\langle C\rangle = K\langle D\rangle$ where
$D$ is the companion matrix for $g$. It follows that if
$\alpha$ is an eigenvalue of
$C$ then there
is a polynomial $s\in K[x]$ such that $s(D)$ has $\alpha$
as an eigenvalue.
But the eigenvalues of $S(D)$ are $\{ s(\beta_1), ..., s(\beta_d)\}$
so $K(\alpha) = K(\beta_j)$ for some~$j$. Therefore $f \sim g$.
\end{proof}

We are now ready to prove the other half of Theorem~\ref{thm:iff}

\begin{proof}[Proof of `only if' direction of Theorem~\ref{thm:iff}]
By Lemma \ref{doublePD} the primary decompositions of $X$ and $Y$ are the same.
Let
\[
V=\bigoplus_{i=1}^t V_i,
\]
where for each $i$ there exist irreducible polynomials $f_i$ and $g_i$ such that $f_1,\dots, f_t$ are distinct,
$g_1,\dots,g_t$ are distinct, and both $f_i(X)$ and $g_i(Y)$ are nilpotent on their restriction to $V_i$. Now by Lemma~\ref{doublePD}, it follows that $\Cent X_i=
\Cent Y_i$, where $X_i$ and $Y_i$ are the restrictions
of $X$ and $Y$ to $V_i$. But then it follows from Proposition~\ref{prop:primary}
that $f_i \sim g_i$ and that the partitions
associated with these polynomials are equal. Therefore the generalized types of
$X$ and~$Y$ are the same.
\end{proof}

\section{Centralizers in symmetric and alternating groups}\label{s:Sn}

Theorem \ref{thm:iff} is analogous to a result for symmetric groups, which, since we have been unable to find it in
the literature, we record here.
Let $g$, $h$ be elements of the symmetric group $S_n$ of all
permutations of $\{1,\ldots, n\}$.
We write $g=v_1\cdots v_n$, where $v_i$ is the product of the cycles
of $g$ of length $i$. Similarly, we write $h=w_1\cdots w_n$.

\begin{definition}{\ }\label{defn:loceqv}
\vspace{-3pt}
\begin{enumerate}
\item If there exists $k_i$ such that $w_i=v_i^k$, then we say that $g$ and $h$ are \emph{locally equivalent} at $i$.
\item We say that $g$ and $h$ are \emph{equivalent} if they are locally equivalent at $i$ for all $i \in \{1,2,\ldots, n\}$.
\item If $S\subseteq \{1,\dots,n\}$ and if $g$ and $h$ are locally equivalent at all $i\notin S$, but not locally equivalent
at $i \in S$, then we say that there is a \emph{local variation} at $S$.
\end{enumerate}
\end{definition}

\begin{theorem}\label{thm:Sn}
Let $g$ and $h$ be elements of $S_n$ whose centralizers in
$S_n$ are equal. Either $g$ and $h$ are equivalent, or there is a local variation at $\{1,2\}$
described by one of the following statements:
\begin{enumerate}
\item $v_1v_2$ is conjugate to $(12)$ and $w_1w_2$ is simultaneously conjugate to $(1)(2)$, or vice versa.
\item $v_1v_2$ is conjugate to $(12)(3)(4)$ and $w_1w_2$ is simultaneously conjugate to $(1)(2)(34)$, or vice versa.
\end{enumerate}
\end{theorem}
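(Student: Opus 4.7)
The plan is to study the common centralizer $C := \Cent_{S_n}(g) = \Cent_{S_n}(h)$ orbit by orbit on $\{1, \ldots, n\}$, and then assemble the local information globally.  I will use the standard structural fact that if $g$ has $m_i$ cycles of length~$i$ then
\[
\Cent_{S_n}(g) \;\cong\; \prod_{i \ge 1} \bigl(C_i \wr S_{m_i}\bigr),
\]
in which the factor $C_i \wr S_{m_i}$ acts on the union of the $i$-cycles of~$g$; consequently the orbits of $C$ on $\{1,\ldots,n\}$ are precisely these cycle-length supports.  Since $\Cent g = \Cent h$, the set of orbits coincides whether computed from~$g$ or from~$h$.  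For each orbit $\Omega$, write $(i, m)$ for the factorization of $|\Omega|$ such that $g|_\Omega$ is a product of~$m$ cycles of length~$i$, and write $(i', m')$ for the analogous data for~$h$.

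Both $g$ and $h$ lie in the center $Z(C)$, so $g|_\Omega, h|_\Omega \in Z(C|_\Omega)$.  A direct computation yields $Z(C_i \wr S_m)$ as the diagonally embedded $C_i$ (of order~$i$) whenever $i \ge 2$ or $m = 1$; the full group $S_2$ when $(i,m) = (1,2)$; and trivial when $i = 1$ and $m \ge 3$.  For $|\Omega| \ge 3$ this center has order~$i$ if $i \ge 2$ and order~$1$ if $i = 1$; these values are distinct among factorizations of $|\Omega|$, so the center order determines $(i, m)$ uniquely.  Applying this both to~$g$ and to~$h$ gives $(i, m) = (i', m')$, and then $h|_\Omega \in \langle g|_\Omega \rangle$; a similar comparison of centralizer orders forces $h|_\Omega = (g|_\Omega)^k$ with $\gcd(i, k) = 1$.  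Thus on every orbit of size $\ge 3$ one obtains local equivalence at the relevant cycle length.  The only flexibility occurs on orbits of size~$2$, where $C|_\Omega = S_2$ is its own center and $g|_\Omega, h|_\Omega$ may each be chosen independently as the identity or the transposition of $\Omega$.

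The key global observation is that a size-$2$ orbit of $C$ arises only from $m_1^{(g)} = 2$ (the fixed-point type) or from $m_2^{(g)} = 1$ (the 2-cycle type), so at most two such orbits can occur, with at most one of each type.  Case analysis on this number completes the proof.  With no size-$2$ orbit, $g$ and $h$ are equivalent.  With exactly one, a discrepancy $g|_\Omega \neq h|_\Omega$ alters the global count of $1$-cycles or $2$-cycles, and compatibility with the (fixed) orbit partition of $C$ forces the other of $m_1^{(g)}, m_2^{(g)}$ to vanish, giving case~(1) of the theorem.  With two size-$2$ orbits, say $\Omega_1$ and $\Omega_2$, the four choices for $(h|_{\Omega_1}, h|_{\Omega_2})$ reduce to two viable options: configurations in which both restrictions become transpositions, or both become trivial, would merge $\Omega_1 \cup \Omega_2$ into a single orbit of size~$4$ of $\Cent h$; and single-sided swaps likewise break the orbit partition.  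Only the no-swap option (equivalence) and the coordinated double swap (case~(2)) then survive.

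The main obstacle is this last piece of bookkeeping in the two-orbit case.  The converse verification---that the double-swap configuration genuinely preserves~$C$ itself and not merely its orbit partition---is immediate, since each size-$2$ orbit contributes the same subgroup $S_2$ of $\Sym(\Omega)$ to $C$ regardless of whether the restriction there is the identity or the transposition.
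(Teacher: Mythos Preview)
Your argument is correct and takes a genuinely different route from the paper's.  The paper reconstructs the individual $g$-cycles from the permutation action of $C$: for a point $\alpha$ in $X_i$ it shows that the fixed-point set of the stabilizer $C_\alpha$ is exactly the $g$-cycle through $\alpha$ (the lone exception being $i=1$ with $m_1=2$, which produces the ambiguity in the statement); having recovered the cycles, it then observes that $C$ contains a full cycle $c$ on $X_i$, so that both $g|_{X_i}$ and $h|_{X_i}$ lie in $\langle c\rangle$ and are therefore powers of one another.  You instead bypass both steps with a single computation of $Z(C_i\wr S_m)$: since this center is the diagonal copy of $C_i$ whenever $i\ge 2$, its order recovers $i$ on every $C$-orbit of size $\ge 3$, and the equality $\langle g|_\Omega\rangle = Z(C|_\Omega) = \langle h|_\Omega\rangle$ immediately yields $h|_\Omega = (g|_\Omega)^k$ with $\gcd(i,k)=1$.  (This last point is really what makes your argument work, and it is a cleaner justification than the ``comparison of centralizer orders'' you mention: both $g|_\Omega$ and $h|_\Omega$ \emph{generate} the center, so the exponent is automatically coprime to $i$.)  Your approach is shorter and more structural; the paper's gives slightly finer information---the actual cycles rather than just their lengths---which may be useful in other contexts.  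One small quibble: in the two-orbit case your phrase ``single-sided swaps likewise break the orbit partition'' is confusing, since the four options are already exhausted by the two ``both-same'' configurations (which fail) and the two ``mixed'' configurations (which survive as equivalence and case~(2)); but the intended argument is clear and correct.
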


\begin{proof} Let $X_i$ be the support of $v_i$. Then
\[
\Cent_{S_n}(g) \cong \bigoplus_i \Cent_{\Sym(X_i)}(v_i).
\]
If $g$ and $h$ are locally equivalent at $i$, then the support of $w_i$ is $X_i$,
and clearly
$\Cent_{\Sym(X_i)}(w_i)=\Cent_{\Sym(X_i)}(v_i)$. It follows easily that if $g$ and $h$ are equivalent,
then their centralizers are equal.

For the converse, let $G$ be the centralizer of $g$ in $S_n$. Let
$\alpha\in\{1\dots n\}$ be a point in $X_i$. Note that $G$ permutes
the orbits of $g$ of length $i$ transitively, as blocks for its action.
Thus  the orbit
$\alpha^G$ is equal to $X_i$.
Let $G_\alpha$ be the stabilizer of $\alpha$ in~$G$. It is not
hard to show that that $G_\alpha$ acts transitively on the points
in the cycles of length $i$ not containing $\alpha$, and fixes
the points lying in the same
$g$-cycle as $\alpha$. Thus the set $F_\alpha$ of fixed points of $G_\alpha$
consists precisely of the $i$ points lying in the same $g$-cycle as $\alpha$,
\emph{except} when $i=1$ and $g$ has exactly two fixed points.
Therefore, when we attempt
to reconstruct the orbits of $g$ from the permutation action of~$G$, the
ambiguities arise
precisely from the local variations in the statement of the theorem.
Furthermore since $(12)$ and $(1)(2)$
have the same centralizer in $S_2$, and since $(12)$ and $(34)$ have the same centralizer in
$S_4$, there is no possibility of resolving these ambiguities.

We shall assume that we are not in this exceptional case. Suppose that $g$ has $j$ cycles of length $i$.
We have seen that the set $X_i$ is determined by the permutation
action of $G$. We observe that $G$ contains an element
which acts as a full cycle $c$ on $X_i$. Let $g_i$ be the restriction
of $g$ to $X_i$. Since the centralizer of $c$ in $\Sym(X_i)$
is the cyclic group $\langle c\rangle$, we see that $g_i=c^m$ for some $m$. Since $c$ has order $ij$, it is clear that
$m=jk$ for some $k$ coprime with $i$.

Now if $h$ is another permutation whose centralizer in $S_n$ is $G$, and if $h_i$ is the restriction of $h$ to $X_i$,
then we must similarly have that $h_i=c^{j\ell}$, where $\ell$ is coprime with $i$. Now since $k$ and $\ell$ are
invertible modulo $i$, we have $g_i=h_i^{k/\ell}$ and $h_i=g_i^{\ell/k}$. So $g$ and $h$ are locally equivalent at
$i$ as required.
\end{proof}

An obvious consequence of Theorem~\ref{thm:Sn} is that if two elements $x$ and $y$ of~$S_n$ have
centralizers which are isomorphic as permutation groups, then either $x$ is conjugate  to $y$, or else there is a
unique transposition~$t$ such that $t$ centralizes $y$ and
$x$ is conjugate to $ty$. We remark that this conclusion does not
hold if the centralizers of $x$ and $y$ are isomorphic merely as abstract groups. As an example, suppose that
$n=2k\ell+k+\ell-1$, where $k$ and $\ell$ are greater than $1$, and such that
$k$, $\ell$, $2k-1$ and $2\ell-1$ are pairwise
coprime. Let $x$ and $y$ be permutations such that $x$ has cycles of lengths
$k$, $2\ell -1$ and $\ell(2k-1)$, and $y$ has cycles of lengths $\ell$, $2k-1$ and $k(2\ell-1)$. Then
$x$ and $y$ have no cycle lengths in common, but each has a centralizer that is cyclic of order $k\ell(2k-1)(2\ell-1)$.

Finally, it is  worthwhile to state  the analogous result to Theorem~\ref{thm:Sn} for the alternating
groups~$A_n$. We shall not prove it here;
the proof follows similar lines to that of Theorem~\ref{thm:Sn}, but is complicated slightly by the fact that centralizer
$G$ of an element $g$ in $A_n$ is
not in general a direct product of permutation groups on the sets $X_i$, though it
has index at most $2$
in such a product: in fact the restriction
of $G$ to the set $X_i$ acts either as $\Cent_{\Alt(X_i)}(v_i)$ or as $\Cent_{\Sym(X_i)}(v_i)$, depending on whether
the cycles of $g$ of length other than $i$ have distinct odd lengths.

\begin{theorem}
Let $g$ and $h$ be elements of $A_n$ whose centralizers in $A_n$ are equal. Then either $g$ and $h$ are equivalent,
or one of the following statements is true.
\begin{enumerate}
\item There is a local variation at $\{1,2\}$, with $v_1v_2$ conjugate to $(12)(3)(4)$
and $w_1w_2$ simultaneously conjugate to $(1)(2)(34)$.
\item There is a local variation at $\{2\}$, with $v_2$ being conjugate to $(12)(34)$
and $w_2$ simultaneously conjugate to $(13)(24)$. Elsewhere, each of $g$ and $h$ has only odd cycles of
distinct lengths.
\item There is a local variation at $\{1,3\}$, with $v_1v_3$ conjugate to $(123)$
and $w_1w_3$ simultaneously conjugate to $(1)(2)(3)$. Elsewhere, each of $g$ and $h$ has only odd cycles of
distinct lengths.
\item For some odd integer $m$ there is a local variation at $\{m\}$, with $v_m$ and $w_m$ each having exactly two
cycles. Each cycle of $w_m$ is a power of a cycle of~$v_m$, but the two exponents, taken modulo $i$, are distinct. Elsewhere, each of $g$ and~$h$ has only odd cycles of
distinct lengths.
\end{enumerate}
\end{theorem}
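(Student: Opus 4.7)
The plan is to parallel the proof of Theorem~\ref{thm:Sn} while carefully tracking the index-at-most-$2$ inclusion $\Cent_{A_n}(g) \leq \Cent_{S_n}(g) = \bigoplus_j \Cent_{\Sym(X_j)}(v_j)$. Write $G = \Cent_{A_n}(g) = \Cent_{A_n}(h)$. The first step is a sign calculation in the wreath product $C_j \wr S_{m_j}$, where $m_j$ is the number of $j$-cycles of~$g$: this yields that $\Cent_{\Sym(X_j)}(v_j) \subseteq \Alt(X_j)$ precisely when either $m_j = 0$, or $m_j = 1$ and $j$ is odd. Consequently the projection of $G$ onto $\Cent_{\Sym(X_i)}(v_i)$ equals the full symmetric centralizer whenever some $j \neq i$ fails this dichotomy, and equals only $\Cent_{\Alt(X_i)}(v_i)$ when every cycle of $g$ of length different from~$i$ is either absent or a single odd-length cycle.

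Next I would recover the sets $X_i$ from the permutation action of~$G$, using the fixed-point analysis of Theorem~\ref{thm:Sn}. When every projection is full, that argument transfers verbatim, giving either equivalence or a variation at $\{1,2\}$ of case~(1); note that the other sub-case of $S_n$-case~(1), namely $(12)$ versus $(1)(2)$, is parity-excluded in $A_n$ since the two permutations have opposite signs. In the restricted regime, the loss of swap-type elements from $G$ produces additional coincidences of centralizers, which become cases~(2), (3), and~(4).

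The heart of the argument is to determine $v_i$ given $X_i$. If the projection to $X_i$ is the full symmetric centralizer, the $(i m_i)$-cycle argument from Theorem~\ref{thm:Sn} yields an element of $G$ acting as a full cycle on $X_i$, and local equivalence at~$i$ follows. In the restricted regime I would classify all $w_i$ satisfying $\Cent_{\Alt(X_i)}(w_i) = \Cent_{\Alt(X_i)}(v_i)$ by computing the double centralizer $\Cent_{\Sym(X_i)}\bigl(\Cent_{\Alt(X_i)}(v_i)\bigr)$. Three small configurations produce a strict enlargement beyond $\langle v_i\rangle$: the equality $\Cent_{A_3}((123)) = A_3 = \Cent_{A_3}(e)$ produces case~(3); the Klein four centralizer of $(12)(34)$ in $A_4$ also centralizes $(13)(24)$ and $(14)(23)$, producing case~(2); and for $v_m$ a product of two $m$-cycles $c_1, c_2$ with~$m$ odd, the cycle-swap is a product of $m$ transpositions and hence odd, so $\Cent_{\Alt(X_m)}(v_m) = \langle c_1\rangle \times \langle c_2\rangle$ and any $c_1^a c_2^b$ with $a, b$ coprime to~$m$ has the same centralizer, producing case~(4).

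The main obstacle is showing the classification is exhaustive: in every other configuration the double centralizer must collapse to $\langle v_i\rangle$. A clean organisation splits the argument according to $|E|$, where $E = \{j : \Cent_{\Sym(X_j)}(v_j) \not\subseteq \Alt(X_j)\}$; for $|E| \geq 2$ every projection is full and the proof of Theorem~\ref{thm:Sn} applies directly to each~$i$, while for $|E| \leq 1$ only a few specific configurations $(i, m_i)$ admit restricted-centralizer ambiguity, and each must be ruled out or identified by a direct double-centralizer calculation, yielding precisely cases (2)--(4).
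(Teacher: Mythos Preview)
The paper does not prove this theorem: immediately before the statement it says ``We shall not prove it here'', offering only the one-sentence hint that the argument follows Theorem~\ref{thm:Sn} with the complication that $G=\Cent_{A_n}(g)$ has index at most~$2$ in $\prod_i\Cent_{\Sym(X_i)}(v_i)$, and that the restriction of $G$ to $X_i$ equals either $\Cent_{\Sym(X_i)}(v_i)$ or $\Cent_{\Alt(X_i)}(v_i)$ according as the cycles of $g$ of length other than~$i$ do or do not have distinct odd lengths. Your proposal is precisely a fleshing-out of that hint: your sign computation in $C_j\wr S_{m_j}$ recovers exactly the paper's ``distinct odd lengths'' criterion, and your split on $|E|$ implements the dichotomy between full and restricted projections. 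There is therefore no alternative argument in the paper to compare with; you are supplying what the authors omit, along the line they indicate.

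One organisational point deserves care. You separate ``recover the sets $X_i$'' from ``determine $v_i$ given $X_i$'', but case~(3) shows that the sets $X_i$ themselves can differ between $g$ and $h$: a single $3$-cycle for $g$ becomes three fixed points for $h$, so $X_1$ and $X_3$ are genuinely different for the two permutations. Your local computation $\Cent_{A_3}((123))=A_3=\Cent_{A_3}(e)$ is the right one, but it does not sit inside a ``given $X_i$'' framework. In the restricted regime the honest invariants are the $G$-orbits on $\{1,\ldots,n\}$ together with the fixed-point sets of their point-stabilisers (as in the proof of Theorem~\ref{thm:Sn}), and the additional ambiguities in cases (2)--(4) should be phrased as failures of that fixed-point data to pin down the $g$-cycle through~$\alpha$, rather than as a separate double-centraliser step after the $X_i$ are known. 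With that adjustment your outline is sound.
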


\end{document}